\DeclareMathOperator{\divv}{div}
\newcommand{\pair}[1]{\left\langle #1 \right\rangle}
\providecommand{\norm}[1]{\lVert#1\rVert}
\providecommand{\abs}[1]{\lvert#1\rvert}
\newcommand{\RR}{{\mathbb R}}
\newcommand{\vol}{\mu_g}
\newcommand{\Diff}{\mathrm{Diff}}
\newcommand{\Xcal}{\mathfrak{X}}
\newcommand{\Diffvol}{{\Diff_{\vol}}}
\newcommand{\Xcalvol}{{\Xcal_{\vol}}}
\newcommand{\Prob}{\mathrm{Prob}}
\newcommand{\Jac}{\mathrm{Jac}}
\newcommand*\id{\mathrm{id}}
\def\ph{\varphi}
\let\on=\operatorname
\title[On Geodesic Completeness for Metrics on Probability Densities]{On Geodesic Completeness for Riemannian Metrics on Smooth Probability Densities}
\author[Bauer]{Martin Bauer}
\address[M.\ Bauer]{Department of Mathematics, Florida State University}
\email{bauer@math.fsu.edu}
\author[Joshi]{Sarang Joshi}
\address[S.\ Joshi]{Department of Bioengineering, Scientific Computing and Imaging Institute, University of Utah}
\email{sjoshi@sci.utah.edu}
\author[Modin]{Klas Modin}
\address[K.\ Modin]{Department of Mathematical Sciences, Chalmers University of Technology and the University of Gothenburg}
\email{klas.modin@chalmers.se}
\date{\today}                                           % Activate to display a given date or no date
\keywords{Probability densities, Otto metric, Wasserstein distance, Fisher--Rao, diffeomorphism groups, global well-posedness, geodesic completeness}
\subjclass[2010]{58B20, 58E10, 35G25, 35Q31, 76N10}
\begin{document}

\begin{abstract}
	The geometric approach to optimal transport and information theory has triggered the interpretation of probability densities as an infinite-dimensional Riemannian manifold.
	The most studied Riemannian structures are the Otto metric, yielding the $L^2$-Wasserstein distance of optimal mass transport, and the Fisher--Rao metric, predominant in the theory of information geometry.
	On the space of smooth probability densities, none of these Riemannian metrics are geodesically complete---a property desirable for example in imaging applications. 
	That is, the existence interval for solutions to the geodesic flow equations cannot be extended to the whole real line.
	Here we study a class of Hamilton--Jacobi-like partial differential equations arising as geodesic flow equations for higher-order Sobolev type metrics on the space of smooth probability densities.
	We give order conditions for global existence and uniqueness, thereby providing geodesic completeness.
	The system we study is an interesting example of a flow equation with loss of derivatives, which is well-posed in the smooth category, yet non-parabolic and fully non-linear.
	On a more general note, the paper establishes a link between geometric analysis on the space of probability densities and analysis of Euler-Arnold equations in topological hydrodynamics.
% Of concern is the study of higher order metrics on the space of smooth probability densities. Similar as the Fisher-Rao metric - prevalent in the field of information geometry - and the Wasserstein-metric, which is the central object in the field of optimal mass transport, the metrics studied in this article arise via projection from Riemannian metrics  on the diffeomorphism group. Using deep results on right-invariant metrics on this infinite-dimensional Fr\'echet Lie group, we are able to obtain local and global well posedness results for the corresponding geodesic equations on the space of smooth probability densities. The investigations in these article are further motivated by connections to image analysis and recent work on regularized optimal transport metrics.   
\end{abstract}

\maketitle

\tableofcontents

% \newpage

\section{Introduction}

The space of probability densities on a manifold $M$ has an eminent rôle in several fields of mathematics.
In addition to statistics, it is central in the Monge formulation of optimal mass transport.
The work of \citet{Br1991} on the $L^2$ formulation of optimal transport, and of \citet{BeBr2000} and \citet{Ot2001} on the (formal) Riemannian structure of the $L^2$-Wasserstein distance, has fueled a geometric approach to the space of probability densities as an infinite-dimensional Riemannian manifold.
This view is further emphasized in the field of information geometry~\cite{AmNa2000,Fr1991,KhLeMiPr2013,Am2016}, which aims to describe statistics and information theory in the language of differential geometry, thereby relating statistical and geometrical concepts. 

Riemannian structures on the space of probability densities come in two dominant flavors.
In optimal transport, we have the \emph{Otto metric}, which is of Sobolev type $H^{-1}$. 
% At the tangent space of a probability function $\rho$, 
At each tangent space, it is formally defined in terms of the inverse of a second order elliptic operator depending on the base point (see \autoref{sub:summary} below).
To make Otto's geometry rigorous, one usually works on the metric space $\mathcal{P}_2(M)$ of Borel probability measures with finite second moments, equipped with the $L^2$-Wasserstein distance.
Gradient flows with respect to the Otto metric can then be handled through De Giorgi's minimizing movements~\cite{Gi1993} in the setting of metric geometry, as developed by~\citet*{AmGiSa2008}.
% \todo[author=KM]{Say something more about rigorouos Otto calculus.}\
% $\nabla\cdot\rho\nabla$ acting on functions of zero mean 

The second flavor occurs in information geometry.
Here, the Riemannian structure is given by the \emph{Fisher--Rao metric}, which is of type $L^2$.
More precisely, by a change of coordinates induced by the square root map, the Fisher--Rao geometry of the space of probability densities corresponds to an open subset of the infinite-dimensional $L^2$-sphere~\cite{Fr1991}. 
Thus, the geometry associated with the Fisher--Rao metric is fully known: the curvature is constant, positive, and the geodesics are given by great circles.
Contrary to the Otto metric, the Fisher--Rao metric is canonical, meaning it does not depend on the Riemannian structure of~$M$.
Consequently, the Fisher--Rao metric is invariant with respect to any choice of coordinates, or, equivalently, it is invariant under the action of the group of diffeomorphisms.
Up to a multiple it is, in fact, the only Riemannian metric on the space of smooth probability densities with this property~\cite{AJLV2015,BaBrMi2015}.
% Both the Otto and 
% When solving the geodesic boundary value problem, th
The Fisher--Rao metric renders the space of probability densities \emph{geodesically convex}, an essential property for the geodesic boundary value problem.
Thus, any two probability densities are connected by a unique minimizing Fisher--Rao geodesic.
This follows directly from the simple sphere geometry.
For the Otto metric, the situation is more complicated:
on $\mathcal{P}_2(M)$ we have geodesic convexity, but on the subspace of smooth probability densities geodesic convexity depends on the curvature of $M$ (for details, see \cite[\S\!~12]{Vi2009}).

Recently, several authors  combined the Otto and the Fisher--Rao metric to obtain a mixed order metric on the space of \emph{unbalanced measures}, i.e., measures that do not have the same total mass. 
This work is motivated by imaging applications \cite{ChScPeVi2016}, gradient flow problems \cite{LiMiSa2016,KoMoVo2016}, and by mass transport problems for contact structures \cite{Rezakhanlou2016}. 
In contrast to these Riemannian structures, the paper at hand considers optimal mass transport metrics induced by arbitrary order, right-invariant metrics on the diffeomorphism group.\todo{Also review other work on regularized optimal transport.}

% This 
When studying geodesic initial value problems, one is often interested in \emph{geodesic completeness}, i.e., the question of unbounded existence interval for geodesics. 
As an example, the long-standing problem of global existence of the incompressible Euler equations is, by Arnold's geometric interpretation~\cite{Ar1966}, a question of geodesic completeness.
In the category of smooth probability densities, none of the Otto or Fisher--Rao geometries are geodesically complete. %, which is of interest for the geodesic initial value problem. 
Thus, geodesic curves with respect to these geometries have bounded existence intervals. 
This has triggered the search for geodesically complete Riemannian structures on the space of smooth probability densities.
The natural approach is to look for higher-order $H^k$-type metrics.
The search for such metrics is further motivated by applications in mathematical imaging; here one is interested in finding optimal deformations between densities representing data from applications. 
Using a Riemannian metric on the diffeomorphism group to encode the differences between the shapes yields the framework of `Large Deformation Diffeomorphic Metric Matching'~\cite{BeMiTrYo2005,MiTrYo2002}. 
% It is woth to note here, that 
% the construction used in this article to induce a Riemannian metric on the space of densities is excatly the underlying concept of this framework. 

In this paper we study geodesic equations for Riemannian $H^k$-type metrics on the space of smooth probability densities.
The geodesic equation is a non-linear PDE of order $2k$, closely related to the Hamilton--Jacobi equation of fluid dynamics (it can be seen as a regularized version).
We prove geodesic completeness: if $M$ is a closed manifold of dimension $d$ and $k > d/2$, then the Cauchy problem has a unique, global solution in the Fréchet topology of smooth probability functions.
The proof is based on powerful techniques developed for geodesic equations on groups of diffeomorphisms \cite{MP2010,EK2014a,BrVi2014,BEK2015}.
When working with groups of diffeomorphism, the standard trick is to use Sobolev $H^s$ completions for $s>d/2+1$.
In our setting this is not possible: the strong form of the geodesic initial value problem is not well-posed in the $H^s$ topology.
Instead, it is essential to work in the Fréchet topology of smooth functions, as developed by \citet{Ha1982} and by \citet{Omori1997}.

\subsection{Summary of main result}\label{sub:summary}
In this section we summarize the main result. % in the simplified case when $M$ is the flat $d$-torus $\TT^d$.
We also present the relation to the Otto metric and $L^2$ optimal transport.

Let $(M,g)$ be an orientable closed Riemannian $d$-manifold.
Denote by $\vol$ the induced Riemannian volume form.
% Let $\ud x = \ud x^1\wedge\ldots\wedge \ud x^d$ be the standard volume form on $\TT^d$.
A smooth probability density is a weighted volume form $\mu= \rho\,\vol$, where $\rho\in C^\infty(M)$ is a smooth, real-valued \emph{probability function} fulfilling
\begin{equation}\label{eq:rho_condition}
	\rho(x) > 0,\quad \forall\, x\in M \qquad\text{and}\qquad \int_{M}\rho \, \vol = 1 .
\end{equation}
The set of such functions is denoted $P^\infty(M)$. %; it is an affine subspace of $C^\infty(M)$.
Its tangent elements belong to the space $C^\infty_0(M)$ of smooth functions with vanishing mean.

% The tangent space of

 % the infinite-dimensional manifold of smooth probability densities consists of functions with vanishing mean value.

% If $t\mapsto\rho(t,\cdot)$ is a differentiable curve in $P^\infty(M)$, then the time derivative $\rho_t(t,\cdot)$ integrates to zero.
% for any $t$
% \begin{equation}
% 	\int_{\TT^d} \rho_t(t,\cdot) \, \ud x = 0.
% \end{equation}
% Thus, the tangent space of the infinite-dimensional manifold of smooth probability densities consists of functions with vanishing mean value.
To specify a Riemannian metric we need to give, for each probability function $\rho\in P^\infty(M)$, an inner product on the tangent space $T_\rho P^\infty(M) \simeq C^\infty_0(M)$.
We first recall how this is done for the Otto metric.
From the fluid dynamical approach to optimal transport of \citet{BeBr2000}, we know that $\rho$ is evolving according to the continuity equation
\begin{equation}\label{eq:continuity_eq}
	\rho_t + \divv (\rho u) = 0,
\end{equation} 
where $u(t,x)$ is the time-dependent vector field governing the fluid.
For the Otto metric, one takes this vector field to be 
\begin{equation}\label{eq:u_grad_p}
	u = \nabla p	
\end{equation}
for some potential function~$p$.
Through the continuity equation \eqref{eq:continuity_eq} we then get a self-adjoint, second order elliptic equation relating $p$ to $\rho_t$.
Now, the Otto Riemannian metric at the base point $\rho$ is given by
\begin{equation}\label{eq:general_p_metric}
	\pair{\rho_t,\rho_t}_\rho = \int_{M} \rho_t p \, \vol.
\end{equation}

The metric studied in this paper is very similar to that of Otto: it is of the form \eqref{eq:general_p_metric} with $\rho_t$ fulfilling the continuity equation \eqref{eq:continuity_eq}.
The difference is the relation between $p$ and the fluid vector field $u$; instead of \eqref{eq:u_grad_p} we take
\begin{equation}\label{eq:u_regularized}
	(1-\Delta)^{k+1} u = \rho\nabla p , \quad k\geq -1,
\end{equation}
where $\Delta$ denotes the Laplace--Beltrami operator.
% That is the difference.
Notice that the operator $(1-\Delta)^{k+1}$ has a regularizing effect on the vector field $u$.
This regularization is, essentially, the key ingredient that leads to geodesic completeness.

Next we consider the geodesic equation.
For the Otto metric, it is given by the continuity equation \eqref{eq:continuity_eq} coupled with  the Hamilton--Jacobi equation
\begin{equation}\label{eq:ham_jacobi}
	p_t + \abs{\nabla p}^2 = 0 .
\end{equation}
More precisely, the equations \eqref{eq:continuity_eq} and \eqref{eq:ham_jacobi} constitute the \emph{Hamiltonian form} of the geodesic equation for the Otto metric: $\rho$ is the configuration and $p$ is the momentum variable.

% for the kinetic energy Hamiltonian $H(q,p) = \abs{p}^2/2$.
% More precisely, this is the 
% In our case, the geodesic equation

The Hamiltonian form of the geodesic equation for the regularized metric studied in this article 
%
% \begin{equation}
% 	\pair{\rho_t,\rho_t}_\rho = \int_{\TT^d} \rho_t p \, \ud x,
% \end{equation}
% where $p\in C^\infty(\TT^d)$ is related to $\rho_t$ through the self-adjoint elliptic pseudo-differential equation
% \begin{equation}
% 	\rho_t = -\nabla\cdot (\rho (1-\Delta)^{-k} (\rho \nabla p)).
% \end{equation}
% We stress here the resemblance with the Otto metric, for which the relation between $p$ and $\rho_t$ is $\dot\rho = -\nabla\cdot\rho \nabla p$.
% To write down the geodesic equation for the Riemannian metric $\pair{\cdot,\cdot}_\rho$, we first introduce a smooth, time-dependent vector field $u(t,x) \coloneqq (u^1(t,x),\ldots,u^d(t,x))$.
% The geodesic equation then 
%
consists in 
%finding $\rho,p,u^i \in C^\infty(\RR\times\TT^d)$ that fulfill 
the system of partial differential equations given by
\begin{equation}\label{eq:geodesic_pde_torus}
	\begin{split}
		(1-\Delta)^{k+1} u - \rho\nabla p &= 0, \\
		p_t + u\cdot \nabla p &= 0, \quad p(0,\cdot) = p_0 \\
		\rho_t + \divv(\rho u) &= 0, \quad \rho(0,\cdot) = \rho_0	.
	\end{split}
\end{equation}
(See \autoref{thm:main:result} for a derivation.)
Notice, as expected, the close relation to the Hamilton--Jacobi equation: if $\rho \equiv 1$ and $k=-1$, then $p$ in \eqref{eq:geodesic_pde_torus} fulfills the Hamilton--Jacobi equation~\eqref{eq:ham_jacobi}.
Furthermore, from the continuity equation it follows that
\begin{equation}
	\int_{M}\rho_0\, \vol = \int_{M} \rho(t,\cdot)\,\vol .
\end{equation}
Thus, total mass (or probability) is conserved. % for solutions of \eqref{eq:geodesic_pde_torus}.
% Thus, solutions to \eqref{eq:geodesic_pde_torus} evolve on the

% if the initial density $\rho_0 \, \ud x$ has unitary volume, then so has $\rho(t,\cdot)\ud x$ for any~$t$.

We are now ready to state the main result.

\begin{theorem}\label{thm:main result}
	Let $k > d/2$, $\rho_0 \in P^\infty(M)$, and $p_0 \in C_0^\infty(M)$.
	Then:
	\begin{enumerate}
		\item There exist functions $\rho,p \in C^\infty(\RR\times M)$ and a vector field $u\in C^\infty(\RR\times M,TM)$ fulfilling equation \eqref{eq:geodesic_pde_torus}.
		The solution is unique up to addition of constants to $p$.
		 % and the condition $p(t,\cdot) \in C_0(M)$ for all $t\in\RR$;
		% \begin{equation}
		% 	% \rho(t,\cdot) \in P^\infty(M). %, \qquad 
		% 	p(t,\cdot) \in C_0(M)
		% \end{equation}

		\item For all $t\in\RR$ the solution fulfills $\rho(t,\cdot) \in P^\infty(M)$.
		% \begin{equation}
		% 	\rho(t,\cdot) \in P^\infty(M) %, \qquad 
		% 	% p(t,\cdot) \in C_0(M).
		% \end{equation}

		\item For any fixed $t\in\RR$, the solution map
		\begin{equation}
			(\rho_0,p_0)\longmapsto \Big(\rho(t,\cdot),p(t,\cdot),u(t,\cdot)\Big) 
			\quad \text{with}\quad p(t,\cdot)\in C_0^\infty(M)
		\end{equation}
		is smooth (with respect to the standard Fréchet topology of $C^\infty(M)$).

		% $\rho(t,\cdot),p(t,\cdot),u^1(t,\cdot),\ldots,u^d(t,\cdot)$  depends smoothly  on the initial data $\rho_0$ and $p_0$.
	\end{enumerate}
	% If $k\geq d/2$ and $\rho_0,p_0 \in C^\infty(M)$, where $\rho_0$ fulfills \eqref{eq:rho_condition}
	% Then there exists unique functions $\rho,p,u^1,\ldots,u^d \in C^\infty(\RR\timesM)$ fulfilling equation \eqref{eq:geodesic_pde_torus} and, for all $t\in\RR$,
	% \begin{equation}
	% 	\rho(t,\cdot) \in P^\infty(M), \qquad p(t,\cdot) \in C_0(M).
	% \end{equation}
	Consequently, the space $P^\infty(M)$ of smooth probability functions, equipped with the Riemannian metric \eqref{eq:general_p_metric} defined by the equations \eqref{eq:continuity_eq} and \eqref{eq:u_regularized}, is geodesically complete.
	% Furthermore, the solution $\rho,p,u^1,\ldots,u^d \in C^\infty(\RR\times\TT^d)$ depends smoothly on the initial data $\rho_0$ and $p_0$.
\end{theorem}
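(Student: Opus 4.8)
The plan is to pass from the Eulerian system \eqref{eq:geodesic_pde_torus} to a Lagrangian flow on the diffeomorphism group $\Diff(M)$, where the loss of derivatives disappears. First I would introduce the flow $\varphi_t\in\Diff(M)$ of the time-dependent vector field $u$, i.e.\ $\dot\varphi_t = u\circ\varphi_t$ with $\varphi_0=\id$. The last two equations of \eqref{eq:geodesic_pde_torus} are transport equations, hence solved explicitly by $p(t,\cdot)=p_0\circ\varphi_t^{-1}$ and $\rho(t,\cdot)\,\vol=(\varphi_t)_*(\rho_0\,\vol)$; equivalently $p\circ\varphi_t=p_0$ and $(\rho\circ\varphi_t)\,\Jac(\varphi_t)=\rho_0$, both depending on $\varphi_t$ alone and not on $\varphi_t^{-1}$. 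Substituting these into the first equation and precomposing with $\varphi_t$ turns the momentum relation into $A_{\varphi_t}(\dot\varphi_t)=w(\varphi_t)$, where $A=(1-\Delta)^{k+1}$, where $A_\varphi=R_\varphi\circ A\circ R_{\varphi^{-1}}$ is the pulled-back (elliptic, order $2(k+1)$) inertia operator with $R_\psi$ denoting right-translation, and where $w(\varphi)$ is explicit and algebraic in $\nabla p_0$, $\rho_0$, and the first derivatives of $\varphi$. This exhibits \eqref{eq:geodesic_pde_torus} as a \emph{first-order} flow $\dot\varphi_t=F(\varphi_t)$, $F(\varphi)=A_\varphi^{-1}w(\varphi)$, on $\Diff(M)$: it is the geodesic equation of the right-invariant $H^{k+1}$-metric $\norm{v}^2=\int_M v\cdot(1-\Delta)^{k+1}v\,\vol$ on $\Diff(M)$ for the frozen coadjoint momentum $m_0=\rho_0\nabla p_0$, and \eqref{eq:general_p_metric} is precisely the metric obtained by Riemannian submersion along $\varphi\mapsto(\varphi)_*(\rho_0\,\vol)$. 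A short integration by parts gives $\pair{\rho_t,\rho_t}_\rho=\norm{u}_{H^{k+1}}^2$, the energy that will later be conserved.

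Next I would establish local existence, uniqueness, and smooth dependence for $\dot\varphi_t=F(\varphi_t)$. Since $A_\varphi^{-1}$ gains $2(k+1)$ derivatives while $w(\varphi)$ costs only one, the vector field $F$ maps into a \emph{more} regular space, so there is no loss in the Lagrangian picture; this is exactly what is lost in the Eulerian form, where reconstructing $(\rho,p)$ from $\varphi$ requires $\varphi_t^{-1}$. I would verify that $F$ is a smooth vector field on the inverse-limit Fréchet Lie group $\Diff(M)=\bigcap_s\Diff^s(M)$, using smoothness of operator inversion and of the group operations in the ILH/tame category of \citet{Omori1997,Ha1982}; alternatively one works in each completion $\Diff^s(M)$, $s$ large, via the theory of ODEs on Banach manifolds as in \cite{MP2010,EK2014a,BEK2015}. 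Either way this yields a unique local solution $\varphi_t$, smooth in $t$ and smoothly dependent on $(\rho_0,p_0)$. Positivity $\rho(t,\cdot)>0$ and conservation of mass are then immediate from $\rho(t,\cdot)\,\vol=(\varphi_t)_*(\rho_0\,\vol)$, and uniqueness up to additive constants in $p$ follows because $p$ enters $F$ only through $\nabla p_0$.

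The crux is global existence, and here the order condition $k>d/2$ enters decisively. The geodesic has constant speed, so $\norm{u(t)}_{H^{k+1}}^2$ is conserved and bounded by its initial value throughout the existence interval. Because $k>d/2$ is equivalent to $k+1>d/2+1$, the embedding $H^{k+1}(M)\hookrightarrow C^1(M)$ gives a uniform bound on $\norm{\nabla u(t)}_{L^\infty}$. A Beale--Kato--Majda / Gronwall argument then controls the growth of $\norm{\varphi_t}_{H^s}$ and of $\log\rho(t,\cdot)$ in terms of $\int_0^t\norm{\nabla u}_{L^\infty}$, so $\varphi_t$ cannot degenerate as a diffeomorphism and $\rho$ cannot touch $0$ or $\infty$ in finite time; equivalently, the right-invariant metric is geodesically complete on $\Diff(M)$ in the spirit of \cite{BrVi2014,BEK2015}. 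Hence the local solution extends to all $t\in\RR$, and patching gives $\rho,p\in C^\infty(\RR\times M)$ and $u\in C^\infty(\RR\times M,TM)$. I expect this a priori estimate to be the main obstacle: one must close the higher-order bounds using only the conserved $H^{k+1}$-energy, i.e.\ show that the top-order terms enter \emph{tamely} (linearly), since the loss of one derivative forbids a naive Picard iteration in any fixed $H^s$. Finally, the smoothness of the solution map in the Fréchet topology (part (3)) follows from smooth dependence of the flow of the smooth vector field $F$ together with smoothness of the reconstruction $\varphi\mapsto(\rho,p,u)$, and geodesic completeness of $P^\infty(M)$ is the projection of the global solution to the base of the submersion.
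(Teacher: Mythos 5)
Your architecture is the same as the paper's: lift the Hamiltonian system to $\Diff(M)$, where the right-invariant metric with inertia operator $A=(1-\Delta)^{k+1}$ has order $k+1>d/2+1$ exactly when $k>d/2$, obtain global well-posedness there, and project back. The realizations differ in two instructive ways. First, the paper runs the Riemannian-submersion machinery explicitly: it constructs the horizontal bundle $\{A^{-1}(\rho\nabla p)\circ\varphi\}$ (\autoref{lem:hor_exists}), lifts the initial data through \autoref{hor:lift} starting from a Moser diffeomorphism $\varphi_0$ with $\varphi_{0*}\vol=\rho_0\vol$, and proves by a direct computation with the EPDiff equation that initially horizontal geodesics stay horizontal. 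Your frozen-momentum formulation $m(t)^\flat\otimes\vol=(\varphi_t)_*\left(\rho_0\,\ud p_0\otimes\vol\right)$ obtains the transport formulas for $\rho$ and $p$, the persistence of the form $m=\rho\nabla p$ (the paper's horizontality computation comes for free), and the derivative-loss-free first-order equation $\dot\varphi=A_\varphi^{-1}w(\varphi)$ all at once, and starting at $\varphi_0=\id$ even sidesteps Moser's theorem. Second, where the paper invokes \autoref{thm:wellposednessDiff} (collecting \cite{MP2010,EK2014a,BrVi2014,BEK2015}) as a black box, you inline its proof mechanism: smoothness of $\varphi\mapsto A_\varphi^{\pm 1}$ on $\mathcal{D}^s(M)$, Banach ODE theory plus no-loss-no-gain for the Fréchet statements, conservation of $\norm{u}_{H^{k+1}}^2=\pair{\rho_t,\rho_t}_\rho$, the embedding $H^{k+1}\hookrightarrow C^1$ for $k>d/2$, and a Beale--Kato--Majda/Gronwall closure. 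You correctly identify the a priori estimate as the hard analytic point and defer it to exactly the literature whose theorems the paper cites, so your sketch is consistent with how those theorems are actually proved; nothing is circular.

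One ingredient is missing, though it affects only the final assertion of the theorem. Geodesic completeness of $P^\infty(M)$ with the metric \eqref{eq:general_p_metric} requires that \emph{every} initial velocity $\dot\rho_0\in C^\infty_0(M)$ be realized by some momentum $p_0$, i.e., that $L_\rho\colon p\mapsto-\divv(\rho A^{-1}(\rho\nabla p))$ be a bijection onto $C^\infty_0(M)$; this is also what makes \eqref{eq:general_p_metric} a well-defined inner product on every tangent space. Your Hamiltonian construction covers only data presented as $(\rho_0,p_0)$. The paper supplies this step in \autoref{lem:Lrho_inverse}: $L_\rho$ is a strictly positive (hence injective) elliptic pseudodifferential operator of order $-2k$, Fredholm of index zero, hence surjective, with elliptic regularity giving the isomorphism on smooth functions. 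A second, smaller point: the stated uniqueness concerns arbitrary smooth solutions of \eqref{eq:geodesic_pde_torus}, not merely the one you construct; this is easily completed in your framework by observing that any smooth solution $(\rho,p,u)$ generates, via the flow of $u$, a solution of your Lagrangian ODE with the same frozen momentum, so uniqueness for the ODE transfers to the PDE, determining $\rho$, $u$, and $p$ modulo constants (the paper's uniqueness argument via uniqueness of the lifted geodesic and right-invariance is the same in substance). With these two additions your proof is complete and equivalent to the paper's.
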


The remaining parts of the paper consists in proving this result in the slightly generalized case when equation \eqref{eq:u_regularized} is replaced by a more general elliptic system. 
% In \autoref{} we will also prove 
We also give a local well-posedness result under the weaker condition $k\geq 0$.
%We have organized the proof as follows...\todo{Finish this.}

% \begin{theorem}
% Consider the Sobolev type metric of order $k$ on the space of smooth probability densities, given by:
% $$\bar G^k_{\mu}(\dot \rho \vol,\dot \rho \vol)=\int_M  (\bar A_\rho \dot \rho)\; \dot\rho \;\vol, \qquad\text{ with }\qquad \bar A_\rho= -\left(\on{div} \rho (1+\Delta)^{-k-1} \rho\nabla \right)^{-1}\;.$$
% The corresponding geodesic equations are given by:
% \begin{equation}\label{geodesic:equation}
%  \begin{aligned}
% p &= \bar A_{\rho}(\rho_t),\\
% p_t&=-(\nabla p)^t.(1+\Delta)^{-k-1}(\rho \nabla p)
% \end{aligned}
% \end{equation}
% If $k\geq 0$, then there exists for any initial conditions a unique non-extendable solution $ (\mu,\al)$ of \autoref{geodesic:equation} and the solution depends smoothly on the initial data. For any $k\geq \frac{m}{2}$ the maximal interval of existence is equal to $\mathbb R$, i.e., 
% the space $(\Prob(M),\bar G^k)$ is  geodesically complete.
% \end{theorem}

\section{Spaces of diffeomorphisms and densities}\label{sec:diff_and_dens}
Let $M$ be a smooth, compact, and orientable Riemannian $d$-manifold without boundary.
The metric tensor is denoted by $g$ and the corresponding volume form  by $\vol$.
Without loss of generality, we may assume that $\int_M\vol = 1$.
The space of smooth $k$-forms on~$M$ is denoted $\Omega^k(M)$.

Of central interest in this paper is the space of smooth \emph{probability densities}
\begin{equation}
	\Prob(M) = \{ \mu\in\Omega^d(M)\mid \mu>0, \quad \int_M\mu = 1 \}.
\end{equation}
This space is naturally equipped with an infinite-dimensional Fréchet topology, making it a \emph{Fréchet manifold} \cite[\S\!~III.4.5]{Ha1982}.
Its tangent bundle is thereby also a Fréchet manifold, and the tangent spaces are given by
\begin{equation}
	T_\mu\Prob(M) = \{ \alpha\in\Omega^d(M)\mid \int_M\alpha = 0 \}.
\end{equation}
Using the Riemannian volume form $\vol$, we can identify $\Prob(M)$ with the space of smooth probability functions $P^\infty(M)$.
% {\color{red} Should we define   $P^\infty(M)$ again? So far we introduced it only in the introduction.}
Indeed, throughout this paper we implicitly associate to each $\mu\in\Prob(M)$ a corresponding $\rho\in P^\infty(M)$ through the relation
\begin{equation}
	\mu = \rho\,\vol .
\end{equation}
Thus, $\rho$ is used in place of $\mu$ whenever convenient.

\subsection{Action of diffeomorphisms on densities}
The diffeomorphism group $\Diff(M)$ is an infinite-dimensional Fréchet Lie group, i.e., it is a Fréchet manifold and the group operations (composition and inversion) are smooth maps \cite[\S\!~I.4.6]{Ha1982}.
The corresponding Fréchet Lie algebra is the space $\Xcal(M)$ of smooth vector fields equipped with minus the vector field bracket.
% \todo{Martin: Its the negative of the vector field bracket?}

$\Diff(M)$ acts on $\Prob(M)$ from the left by pushforward and from the right by pullback
% The right action is given by pullback:
\begin{align}
\Diff(M)\times \Prob(M)&\mapsto \Prob(M)\\
(\varphi,\mu)&\rightarrow \varphi_*\mu
\end{align}
\begin{align}
\Diff(M)\times \Prob(M)&\mapsto \Prob(M)\\
(\varphi,\mu)&\rightarrow \varphi^*\mu\;.
\end{align}
The corresponding actions on $P^\infty(M)$ are
\begin{align}
\Diff(M)\times P^\infty(M)&\mapsto P^\infty(M)\\
(\varphi,\rho)&\rightarrow \Jac(\varphi^{-1})\rho\circ\varphi^{-1}
\end{align}
\begin{align}
\Diff(M)\times P^\infty(M)&\mapsto P^\infty(M)\\
(\varphi,\rho)&\rightarrow \Jac(\varphi)\rho\circ\varphi\;.
\end{align}
By a result of \citet{Mo1965}, these actions are transitive.

The left and right action of $\Diff(M)$ on the fixed volume element $\vol$ yield projections $\Diff(M)\to\Prob(M)$.
Later we shall need the following result of Hamilton.
% Mosers lemma these actions are transitive if restricted to the space of probability densities, i.e.,
% for each $\mu,\nu\in \Prob(M)$, there exists a $\varphi\in \Diff(M)$ such that 
% $\varphi_*\mu=(\varphi^{-1})^*\mu=\nu$, see \cite{Mo1965}. Thus we obtain the result:
\begin{theorem}\label{thm:principal_bundle_diff_dens}
The set of volume preserving diffeomorphisms
\begin{equation}
	\Diffvol(M) = \{\varphi\in\Diff(M)\mid \varphi^*\vol = \vol \}
\end{equation}
is a closed Fréchet Lie subgroup.
Furthermore, the projection maps
\begin{align}
\pi_l: \on{Diff}(M)&\mapsto \Prob(M)\\
\varphi&\mapsto \varphi_*\vol\;,
\end{align}
and 
\begin{align}
\pi_r: \on{Diff}(M)&\mapsto \Prob(M)\\
\varphi&\mapsto \varphi^*\vol\;,
\end{align}
are smooth principal $\Diffvol(M)$-bundles over $\Prob(M)$ with respect to the right and left action of $\Diffvol(M)$ on $\Diff(M)$.
Hence, the set of left cosets $$\Diff(M)/\Diffvol(M)$$ is identified with $\Prob(M)$ by $\pi_l$, and the set of right cosets $$\Diffvol(M)\backslash\Diff(M)$$ is identified with $\Prob(M)$ by $\pi_r$.
\end{theorem}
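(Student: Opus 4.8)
The plan is to deduce the subgroup statement and the two coset identifications from the single substantial point, namely that $\pi_r$ (and, symmetrically, $\pi_l$) admits smooth local sections. Since $\pi_l=\pi_r\o\,\mathrm{inv}$, where $\mathrm{inv}\colon\varphi\mapsto\varphi\i$ is a diffeomorphism of $\Diff(M)$ intertwining left and right translation, it suffices to treat $\pi_r$ with the left $\Diffvol(M)$-action; the statement for $\pi_l$ then follows verbatim. First I would record that $\varphi\mapsto\varphi^*\vol$ is a smooth map $\Diff(M)\to\Omega^d(M)$, so that $\Diffvol(M)=\pi_r\i(\vol)$ is closed. Its being a Lie subgroup, with Lie algebra the divergence-free fields $\{X\in\Xcal(M)\mid\divv X=0\}$, then comes for free once $\pi_r$ is exhibited as a locally trivial submersion, since the fibre $\pi_r\i(\vol)$ of such a map is a splitting submanifold and here also a subgroup.

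The heart of the argument is a parametrized Moser trick producing a smooth section near a given density; by transitivity of the $\Diff(M)$-action (Moser) it is enough to do this near $\vol$. For $\mu\in\Prob(M)$ close to $\vol$, the convex path $\mu_t=(1-t)\vol+t\mu$ consists of positive volume forms of unit mass. I would fix a tame linear right inverse $\delta$ of the exterior derivative on exact $d$-forms, furnished by Hodge theory on the closed manifold $M$, and set $\beta=\delta(\mu-\vol)$, so that $d\beta=\mu-\vol$. Solving the algebraic equation $\interior_{X_t}\mu_t=-\beta$ determines a time-dependent vector field $X_t$ uniquely, since $\mu_t$ is nondegenerate; letting $\psi_t$ be its flow, Cartan's formula together with $d\mu_t=0$ gives $\frac{\ud}{\ud t}\psi_t^*\mu_t=\psi_t^*\big(\LieD_{X_t}\mu_t+\dot\mu_t\big)=\psi_t^*\big(d\,\interior_{X_t}\mu_t+d\beta\big)=0$, whence $\psi_1^*\mu=\vol$. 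Inverting, $s(\mu):=\psi_1\i$ satisfies $s(\mu)^*\vol=\mu$, i.e. $\pi_r(s(\mu))=\mu$, and $s(\vol)=\id$.

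Granting that $s$ is smooth—which I address below—local triviality is formal. Because the left action $\chi\cdot\varphi=\chi\o\varphi$ of $\Diffvol(M)$ preserves the fibres of $\pi_r$, as $(\chi\o\varphi)^*\vol=\varphi^*\chi^*\vol=\varphi^*\vol$, the map
\[
	\Phi\colon U\x\Diffvol(M)\to\pi_r\i(U),\qquad (\mu,\chi)\mapsto\chi\o s(\mu),
\]
is smooth with smooth inverse $\varphi\mapsto\big(\pi_r(\varphi),\,\varphi\o s(\pi_r(\varphi))\i\big)$; the second component indeed lands in $\Diffvol(M)$, since $s(\mu)^*\vol=\mu$ forces $(\varphi\o s(\mu)\i)^*\vol=\vol$. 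This exhibits $\pi_r$ as a principal $\Diffvol(M)$-bundle and identifies $\Prob(M)$ with the right cosets $\Diffvol(M)\backslash\Diff(M)$; applying $\mathrm{inv}$ gives the left-coset statement for $\pi_l$.

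The main obstacle is the smoothness of $s$ in the Fréchet (tame) category, everything else being soft once it holds. Three ingredients enter: the right inverse $\delta$ must be a tame linear operator, which is the content of elliptic Hodge theory on $M$; the assignment $(\mu,\beta)\mapsto X_t$ solving $\interior_{X_t}\mu_t=-\beta$ must be tame smooth, which follows from nondegeneracy of $\mu_t$ and smoothness of the pointwise inversion of the induced nondegenerate pairing; and the time-one flow map of a smoothly parametrized vector field must depend smoothly on its parameter, which is the standard smoothness of the flow on the Fréchet Lie group $\Diff(M)$. Assembling these tame estimates in the Nash--Moser sense is where the real work lies, and it is precisely this package that Hamilton's theorem, cited here, supplies.
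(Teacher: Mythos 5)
Your proposal is correct in outline, but it takes a genuinely different route from the paper for the core assertion. The paper's proof is essentially a citation: that $\Diffvol(M)$ is a closed Fréchet Lie subgroup and that $\pi_r$ is a smooth principal bundle is quoted from \cite[Th.~III.2.5.3]{Ha1982}, whose proof goes through the Nash--Moser inverse function theorem; the statement for $\pi_l$ is then deduced from smoothness of inversion on $\Diff(M)$, exactly as in your reduction $\pi_l=\pi_r\circ\mathrm{inv}$. Where you differ is that you actually construct sections of $\pi_r$ by a parametrized Moser trick. This is more constructive and, if anything, proves more than you claim: since positivity and unit total mass are convex conditions, the path $\mu_t=(1-t)\vol+t\mu$ lies in the space of volume forms of unit mass for \emph{every} $\mu\in\Prob(M)$, and $\mu-\vol$ is always exact (it has zero integral and $H^d(M)\simeq\RR$ via integration), so your $s$ is in fact a \emph{global} section---the restriction to a neighborhood of $\vol$ and the appeal to Moser transitivity are unnecessary, and you obtain in passing that the principal bundle is trivial. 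The Moser computation itself is right: $\frac{\ud}{\ud t}\psi_t^*\mu_t=\psi_t^*\left(\ud\,\interior_{X_t}\mu_t+\ud\beta\right)=0$ gives $\pi_r(s(\mu))=\mu$ for $s(\mu)=\psi_1^{-1}$, and the trivialization $(\mu,\chi)\mapsto\chi\circ s(\mu)$ with the inverse you write is the standard formal step; each approach buys something---the paper's citation is shorter, while yours exhibits explicit sections and the triviality of the bundle.

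Two caveats about your final paragraph. First, it misplaces the difficulty: once $\delta$ (tame Hodge theory), the pointwise inversion $(\mu,\beta)\mapsto X_t$, and the time-one flow are smooth tame maps, $s$ is merely a composition of smooth tame maps---the Moser trick is precisely the device that \emph{avoids} the Nash--Moser inverse function theorem, which Hamilton's own proof of the cited theorem does use; no Nash--Moser "assembly" is needed on your route. Second, as phrased you risk circularity: you cannot invoke "Hamilton's theorem" (i.e., the principal-bundle statement being proved) to supply the smoothness of $s$. What you should cite instead are the independent ingredients in \cite{Ha1982}: tameness of Green's operators in the Hodge decomposition, and smooth tame dependence of flows of time-dependent vector fields on parameters. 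With those citations made precise, your argument is complete and stands as a self-contained alternative to the paper's reference.
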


\begin{proof}
	That $\Diffvol(M)$ is a closed Fréchet Lie subgroup of $\Diff(M)$, and that $\pi^r$ defines a smooth principal bundle is proved by \citet[Th.~III.2.5.3]{Ha1982} using the Nash--Moser inverse function theorem.
	That $\pi_l$ also defines a smooth principal bundle follows since the inversion on $\Diff(M)$ is smooth and since $\Diffvol(M)$ is a Lie subgroup.
\end{proof}

% \begin{remark}
%  Identifying densities and functions as above the projection $\pi_l$ is given by
% \begin{align}
% \pi_l: \on{Diff}(M)&\mapsto \Prob(M)\cong P^{\infty}(M) \\
% \varphi&\mapsto \operatorname{Jac}(\varphi^{-1})\;.
% \end{align}
% \end{remark}

For both projections $\pi_l$ and $\pi_r$ we can calculate the corresponding vertical bundles, defined by the kernel of the tangent mapping.
\begin{lemma}\label{lem:vertical_bundle}
The vertical bundles of the projections $\pi_l$ and $\pi_r$ are given by
\begin{align}
	\operatorname{Ver}^l_{\varphi}
	&=\left\{\dot\varphi \in T_\varphi\Diff(M)\mid  \operatorname{div}(\rho u) = 0, \; u\coloneqq \dot\varphi\circ\varphi^{-1},\; \rho \coloneqq \Jac(\varphi^{-1}) \right\}.
	\\
	\operatorname{Ver}^r_{\varphi}
	&=\left\{\dot\varphi \in T_\varphi\Diff(M)\mid \operatorname{div}(u)=0 , \; u\coloneqq\dot\varphi\circ\varphi^{-1}\right\}.
\end{align}
% Here $\operatorname{div}^{\mu}(X)=\frac{\mathcal L_X \mu}{\mu}%=\mathcal L_{X}\mu
% $ denotes the divergence of the vector field $X$ with respect to the volume 
% form $\mu$. In the latter we will use sometimes $\operatorname{div}(X)$ as an abbreviation for the divergence with respect to the standard density $\vol$, i.e., $\operatorname{div}(X)=\operatorname{div}^{\vol}(X)$.
\end{lemma}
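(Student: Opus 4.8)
The plan is to compute the two tangent maps $T_\varphi\pi_l$ and $T_\varphi\pi_r$ explicitly and read off their kernels. Throughout I would trivialize $T_\varphi\Diff(M)$ from the right: to a tangent vector $\dot\varphi$ I associate the Eulerian velocity field $u \coloneqq \dot\varphi\circ\varphi\i \in \Xcal(M)$, so that any curve $t\mapsto\varphi_t$ through $\varphi$ with $\dot\varphi_0 = \dot\varphi$ satisfies $\dot\varphi_t = u_t\circ\varphi_t$ and $u_0 = u$. The whole computation then rests on the transport formula $\frac{\ud}{\ud t}\varphi_t^*\alpha = \varphi_t^*(\LieD_{u_t}\alpha)$, valid for any smooth form $\alpha$ and any curve of diffeomorphisms with Eulerian velocity $u_t$; this follows by writing $\varphi_{t+s} = (\varphi_{t+s}\circ\varphi_t\i)\circ\varphi_t$ and differentiating at $s=0$, noting that $\varphi_{t+s}\circ\varphi_t\i$ passes through the identity with velocity $u_t$.

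For $\pi_r$ the claim is then immediate: differentiating $\pi_r(\varphi_t) = \varphi_t^*\vol$ at $t=0$ gives $T_\varphi\pi_r(\dot\varphi) = \varphi^*(\LieD_u\vol) = \varphi^*\big((\divv u)\,\vol\big)$, using $\LieD_u\vol = (\divv u)\,\vol$ as the defining property of the divergence relative to $\vol$. Since pullback by a diffeomorphism is injective on forms, this expression vanishes precisely when $\divv u = 0$, which is the claimed description of $\operatorname{Ver}^r_\varphi$.

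For $\pi_l$ I would first convert the pushforward into a pullback, $(\varphi_t)_* = (\varphi_t\i)^*$, and determine the Eulerian velocity of the inverse curve $\psi_t\coloneqq\varphi_t\i$. Differentiating $\varphi_t\circ\psi_t = \id$ yields $\dot\psi_t\circ\psi_t\i = -\varphi_t^* u_t$, so applying the transport formula to $\psi_t$ and invoking naturality of the Lie derivative, $\varphi_*\LieD_X = \LieD_{\varphi_* X}\varphi_*$, leads to $\frac{\ud}{\ud t}(\varphi_t)_*\vol = -\LieD_{u_t}\big((\varphi_t)_*\vol\big)$. Evaluating at $t=0$ and writing $\varphi_*\vol = \rho\,\vol$ with $\rho = \Jac(\varphi\i)$ — the identification fixed earlier via the left action — I obtain $T_\varphi\pi_l(\dot\varphi) = -\LieD_u(\rho\,\vol) = -\divv(\rho u)\,\vol$, where the last equality is the identity $\LieD_u(f\vol) = \divv(fu)\,\vol$. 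Hence the kernel is exactly $\{\dot\varphi \mid \divv(\rho u) = 0\}$, matching the stated $\operatorname{Ver}^l_\varphi$.

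I expect the only genuinely delicate point to be the $\pi_l$ computation: one must correctly identify the Eulerian velocity $-\varphi^* u$ of the inverse flow and keep both the sign and the $\Jac(\varphi\i)$ bookkeeping consistent with the left-action convention. The $\pi_r$ part, together with the two divergence identities $\LieD_u\vol = (\divv u)\vol$ and $\LieD_u(f\vol) = \divv(fu)\vol$, is routine once the transport formula is established.
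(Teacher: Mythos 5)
Your proposal is correct and follows essentially the same route as the paper: compute $T_\varphi\pi_l$ and $T_\varphi\pi_r$ in the right trivialization $u=\dot\varphi\circ\varphi^{-1}$ via the transport formula, arriving at $T_\varphi\pi_r(u\circ\varphi)=\varphi^*\bigl((\divv u)\,\vol\bigr)$ and $T_\varphi\pi_l(u\circ\varphi)=-\LieD_u(\varphi_*\vol)=-\divv(\rho u)\,\vol$, and read off the kernels. The only cosmetic difference is in the left projection, where the paper gets the same formula by differentiating the identity $\phi(t)^*\phi(t)_*\vol=\vol$, whereas you compute the Eulerian velocity $-\varphi^*u$ of the inverse curve $\psi_t=\varphi_t^{-1}$ and invoke naturality of the Lie derivative --- equivalent bookkeeping yielding the identical result.
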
 

\begin{proof}
To  calculate the differential of the projection mappings let $\phi(t,\cdot)$ be a path of diffeomorphisms  with 
\begin{align}
\phi(0,\cdot)&=\varphi\\
\partial_t\big|_{t=0}\phi &= h := u \circ\varphi \text{ for some } u\in \Xcal(M).                                                            
\end{align}
The derivative of the right projection can be calculated via  
\begin{align}
T_{\varphi}\pi_r(u\circ\varphi)=\partial_t\big|_{t=0}\left(\phi(t)^*\vol\right) = \varphi^* \mathcal L_u \vol= \varphi^*\left( \on{div}(u)\vol\right)
\end{align}
which vanishes if and only if $u$ is divergence free with respect to $\vol$.  

For the derivative of the left projection we use 
\begin{align}
0= \partial_t\big|_{t=0}\left(\phi(t)^*\phi(t)_*\vol\right)=\phi(t)^*\left(\mathcal L_u \varphi_*\vol \right)+
\phi(t)^*\partial_t\big|_{t=0}\left( \varphi_*\vol \right)
\end{align}
to obtain
\begin{align}\label{derivative_pi_l}
T_{\varphi}\pi_l(u\circ\varphi)=\partial_t\big|_{t=0}\left( \phi_*\vol \right)=-\mathcal L_u \varphi_*\vol
% =-\on{div}^{\varphi_*\vol}(u)\varphi_*\vol
= -\on{div}(\rho u)\vol
\end{align}
where $\rho=\on{Jac}(\varphi^{-1})$. % and $\divv^\mu$ denotes the divergence with respect to $\mu$.
\end{proof}

\subsection{Spaces of Sobolev diffeomorphisms and densities}
To obtain results on existence of geodesics on $\Diff(M)$, the standard approach is to work in the Banach topology of Sobolev completions, and then use a `no-loss-no-gain' in regularity result by \citet{EbMa1970}.
% manifolds of mappings it is often convenient to study the metric on the level of Sobolev completions. This allows one  
% to prove the well posedness in the smooth category using a no-loss-no-gain in regularity result \cite{EbMa1970}. 
In this section we examine the extensions of the left and right projections in \autoref{thm:principal_bundle_diff_dens} to the Sobolev category.
While the right projection extends smoothly, the left projection turns out to be only continuous. 
Essentially for this reason, the main result of our paper (\autoref{thm:main result}) is valid for smooth probability densities, but not for probability densities with finite regularity.

We first introduce the group of Sobolev diffeomorphisms
\begin{align}
\mathcal{D}^s(M)=\left\{ \varphi\in H^s(M,M)\mid \varphi \text{ is bijective and } \varphi^{-1}\in H^s(M,M)\right\},\quad s>\frac{d}{2}+1\;,
\end{align}
which is a Hilbert manifold and a topological group.
It is, however, not a Lie group, since left multiplication is not smooth (only continuous).
The corresponding set of Sobolev vector fields is denoted $\Xcal^s(M)$. 
For a detailed treatment on these groups we refer to the research monograph by \citet*{IKT2013}.

Similarly we consider the space of Sobolev probability densities
\begin{align}
 \Prob^{s}(M)&=\{\rho\,\vol\mid \rho \in H^{s}(M,\mathbb R_{+}) \text{ with } \int_M \rho\, \vol =1 \text{ and } \rho>0 \},\quad s>\frac{d}{2}\;.
\end{align}
Note that we need $s>\frac{d}{2}$ to ensure that the condition $\rho>0$ is well defined point-wise.

The projections in \autoref{thm:principal_bundle_diff_dens} extend to the Sobolev completions as follows.
\begin{lemma}\label{lem:vertical}
Let $s>\frac{d}{2}+1$ and let $\pi_l$ and $\pi_r$ be the left and right projections in \autoref{thm:principal_bundle_diff_dens}.
\begin{enumerate}
\item  The left projection $\pi_l$ extends to a continuous surjective mapping
\begin{equation}\label{projection_left}
\begin{aligned}
\pi^s_l: \mathcal{D}^s(M)&\mapsto \Prob^{s-1}(M)\\
\varphi&\mapsto \varphi_*\vol\;.
\end{aligned}
\end{equation}
For any $s<\infty$ this mapping is not $C^1$.
% it is $C^0$, but never $C^1$, since $T\pi^q$ has only values in $T\Prob^{q-2}(M)$ and not in
% $T\Prob^{q-1}(M)$. 
\item 
The right projection $\pi_r$ extends to a smooth surjective mapping
\begin{equation}\label{projection}
\begin{aligned}
\pi^s_r: \mathcal{D}^s(M)&\mapsto \Prob^{s-1}(M)\\
\varphi&\mapsto \varphi^*\vol\;.
\end{aligned}
\end{equation}
\end{enumerate}

\end{lemma}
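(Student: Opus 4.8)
The plan is to reduce both projections to the Jacobian map and to isolate the single place where inversion enters, since that is the source of the asymmetry between the two statements. Writing everything in terms of probability functions via $\mu=\rho\,\vol$, the right projection is $\pi^s_r(\varphi)=\varphi^*\vol=\Jac(\varphi)\,\vol$, whereas the left projection is $\pi^s_l(\varphi)=\varphi_*\vol=(\varphi^{-1})^*\vol=\Jac(\varphi^{-1})\,\vol$. Thus $\pi^s_l=\pi^s_r\circ\mathrm{inv}$, where $\mathrm{inv}\colon\varphi\mapsto\varphi^{-1}$. Since inversion on $\mathcal{D}^s(M)$ is continuous but only continuous (not $C^1$) by \cite{EbMa1970,IKT2013}, the whole contrast between parts (1) and (2) will be traced back to this one factor.

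For part (2) I would argue that $\pi^s_r$ is a Nemytskii-type map. The Jacobian $\Jac(\varphi)$ is a fixed polynomial in the first derivatives $D\varphi$ (with coefficients built from the metric $g$). The map $\varphi\mapsto D\varphi$ is bounded linear, hence smooth, from $H^s$ into $H^{s-1}$, and for $s>\frac{d}{2}+1$ the space $H^{s-1}(M)$ is a Banach algebra on which polynomial operations act smoothly; hence $\varphi\mapsto\Jac(\varphi)$ is smooth into $H^{s-1}(M)$. Because $\varphi$ is an orientation-preserving diffeomorphism we have $\Jac(\varphi)>0$, and the change-of-variables formula gives $\int_M\Jac(\varphi)\,\vol=\int_M\vol=1$, so the image lies in $\Prob^{s-1}(M)$ and $\pi^s_r$ is smooth. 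Surjectivity follows from a Sobolev version of Moser's theorem \cite{Mo1965}: given a target $\rho\,\vol\in\Prob^{s-1}(M)$, the interpolation $\rho_t=(1-t)+t\rho$ together with the vector field $u_t$ solving the corresponding continuity equation through an elliptic (gradient) ansatz gains one derivative, so $u_t\in H^s$ and the flow of $u_t$ lies in $\mathcal{D}^s(M)$ and realizes $\rho\,\vol$ (after possibly replacing the diffeomorphism by its inverse).

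Continuity and surjectivity in part (1) are then immediate: $\pi^s_l=\pi^s_r\circ\mathrm{inv}$ is continuous as the composite of a continuous map with a smooth one, and it is surjective because $\mathrm{inv}$ is a bijection of $\mathcal{D}^s(M)$. The main obstacle is the failure of $\pi^s_l$ to be $C^1$, and here the factorization alone is not conclusive, since a non-smooth map composed with a submersion could in principle still be smooth. I would instead argue directly from the derivative formula of \autoref{lem:vertical_bundle}: for a variation $h=\dot\varphi$ with $u=h\circ\varphi^{-1}$ and $\rho=\Jac(\varphi^{-1})$ the Gâteaux derivative is
\begin{equation}
  T_\varphi\pi_l(h)=-\divv(\rho\,u)\,\vol = -\big(u\cdot\nabla\rho+\rho\,\divv u\big)\vol .
\end{equation}
The key is a loss of one derivative: for $\varphi\in H^s$ one has $u=h\circ\varphi^{-1}\in H^s$, but only $\rho=\Jac(\varphi^{-1})\in H^{s-1}$, so $\nabla\rho\in H^{s-2}$ and the term $u\cdot\nabla\rho$ generically lies in $H^{s-2}\setminus H^{s-1}$ whenever $\varphi$ is genuinely of class $H^s$ and not smoother. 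Choosing a base point $\varphi\in\mathcal{D}^s(M)\setminus\mathcal{D}^{s+1}(M)$ and a direction $h$ for which this term does not cancel, the difference quotients of $\pi^s_l$ converge in the weaker $H^{s-2}$ topology to an element outside $H^{s-1}$; since any genuine $H^{s-1}$-valued derivative would also be the $H^{s-2}$-limit, $\pi^s_l$ cannot be Fréchet differentiable at $\varphi$ into $T\Prob^{s-1}(M)$. As this persists for every finite $s$, the left projection is not $C^1$ in the required topology, which is the crux of the lemma and the structural reason the main theorem must be established in the smooth Fréchet category rather than on Sobolev completions.
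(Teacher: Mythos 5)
Your overall route is the same as the paper's: the paper proves smoothness of $\pi^s_r$ by the one-derivative-loss count visible in $T_{\varphi}\pi_r(u\circ\varphi)=\on{div}(u)\circ\varphi\,\Jac(\varphi)$ (outsourcing the details to Ebin), obtains continuity of $\pi^s_l$ from the factorization through the merely continuous inversion, and disproves $C^1$ exactly as you do, by observing that $\rho=\Jac(\varphi^{-1})$ is only of class $H^{s-1}$, so that $\on{div}(\rho u)$ generically lies only in $H^{s-2}$ --- a loss of two derivatives. Your remark that the factorization $\pi^s_l=\pi^s_r\circ\mathrm{inv}$ alone cannot rule out differentiability, and your difference-quotient argument in the weaker $H^{s-2}$ topology (legitimate because inversion is $C^1$ into the lower-regularity group, so the Gâteaux limit does exist there), are correct refinements of the paper's more telegraphic version of the same step.

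There is, however, a genuine gap in your self-contained surjectivity argument for $\pi^s_r$. In the Moser flow ansatz one solves $\Delta f_t = 1-\rho\in H^{s-1}(M)$, giving $f_t\in H^{s+1}$ and $\nabla f_t\in H^s$; but the transporting field is $u_t=\nabla f_t/\rho_t$ with $\rho_t=(1-t)+t\rho$ only of class $H^{s-1}$, and since $1/\rho_t\in H^{s-1}$ (Banach algebra, $s-1>\tfrac{d}{2}$) the product lands only in $H^{s-1}$: the elliptic gain is exactly consumed by the division by $\rho_t$, so $u_t\in H^{s-1}$, not $H^s$ as you claim. Worse, because the hypothesis only gives $s-1>\tfrac{d}{2}$ and not $s-1>\tfrac{d}{2}+1$, the field $u_t$ need not be Lipschitz, so its flow need not define a curve of Sobolev diffeomorphisms at all; the flow method breaks down at precisely the regularity at stake. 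The statement is of course true, and a standard repair runs as follows: at $\varphi=\id$ the gradient ansatz $u=\nabla f$, $\Delta f=-\dot\rho$, requires no division by $\rho$ and genuinely gains one derivative, so $T_{\id}\pi^s_r$ has a bounded right inverse and $\pi^s_r$ is a submersion; hence its image contains an $H^{s-1}$-neighborhood of $\vol$. Given $\rho\,\vol\in\Prob^{s-1}(M)$, mollify to a smooth $\rho_\eps$ close to $\rho$ in $H^{s-1}$ (normalizing the mass), use classical Moser to find a smooth $\varphi_\eps$ with $\varphi_\eps^*\vol=\rho_\eps\vol$, and note that $\pi^s_r(\psi\circ\varphi_\eps)=\varphi_\eps^*\bigl(\psi^*\vol\bigr)$ with pullback by the fixed smooth $\varphi_\eps$ an isomorphism on $H^{s-1}$, so local surjectivity near $\vol$ transports to local surjectivity near $\rho_\eps\vol$ and hits $\rho\,\vol$. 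Alternatively, simply cite the Sobolev-category result as the paper does.
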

\begin{proof}
That $\pi_r^s$ is a smooth surjective mapping is proved by \citet{Eb1970}: one can see from the formula
\begin{align}
T_{\varphi}\pi_r(u\circ\varphi)= \varphi^*\left( \on{div}(u)\vol\right) = \on{div}(u)\circ\varphi\; \Jac(\varphi)
\end{align}
that there is only one loss of derivatives.
That $\pi_l^s$ is a continuous surjective mapping then follows since inversion $\varphi\to\varphi^{-1}$ is continuous in the Sobolev category.
To see that $\pi^s_l$ is not $C^1$ we recall the formula for its derivative 
\begin{align}
T_{\varphi}\pi_l(u\circ\varphi)= -\on{div}(\rho u)\vol, \quad \rho\coloneqq\on{Jac}(\varphi^{-1}).
\end{align}
% with $\rho=\on{Jac}(\varphi^{-1})$.
Since $\rho$ is of class $H^{s-1}$ the function $\on{div}(\rho X)$ is in general only of class $H^{s-2}$ which means there is a loss of two derivatives.
% This yields the desired statement.
\end{proof}

\section{Right-invariant metrics on diffeomorphisms} \label{Ap:rightinvariant}
% Let $g$ be a Riemannian metric on $M$, such that $\operatorname{vol}(g)=\vol$. 
In this section we review existence results for geodesics equations on $\Diff(M)$ with respect to right-invariant Riemannian metrics.

To define a right-invariant Riemannian metric on $\Diff(M)$ we introduce the so-called \emph{inertia operator} 
$A\colon \Xcal(M)\to \Xcal(M)$. 
We assume that $A$ is a strictly positive, elliptic, differential operator, that is self adjoint with respect to the $L^2$ inner product on $\Xcal(M)$.
Any such $A$ defines a inner product $G_{\operatorname{\id}}$ on  $\Xcal(M)$ via
\begin{equation}
 G_{\operatorname{\id}}(X,Y)=\int_M g\left(A X,  Y\right)\, \vol \,.
\end{equation}
We can extend this to a right-invariant metric on $\Diff(M)$ by right-translation:
\begin{equation}\label{eq:Gmet_DiffM}
G_{\varphi}(h,k)= G_{\operatorname{\id}}(h\circ\varphi^{-1},k\circ\varphi^{-1})= \int_M g\left(A (h\circ\varphi^{-1}),  k\circ\varphi^{-1}\right)\, \vol\,.
\end{equation}
% A similar definition can be also chosen to define right-invariant metrics on the Sobolev completions.

Well-posedness of the corresponding geodesic equations is an active area of research. 
The approach dates back to Ebin and Marsden \cite{EbMa1970}, who proved local well-posedness
of the geodesic initial value problem for the $L^2$-metric on the group of volume preserving diffeomorphisms, corresponding to the Euler equations of incompressible inviscid flow.
Using similar techniques, analogous results have been extended to a variety  of right-invariant metrics on groups of diffeomorphisms: Constantin and Kolev showed that the geodesic equation of the right–invariant Sobolev metrics 
of fractional order on $\Diff(S^1)$ is locally well-posed if $k\geq\frac12$ and globally well-posed if $k>\frac{d}2+1=\frac32$, \cite{EK2014, EK2014a}.
In \cite{BEK2015} this result is extended to fractional order metrics on diffeomorphism groups of $\RR^d$.

For diffeomorphism groups of a closed $d$-manifold $M$
the situation has been studied for integer order metrics: 
metrics of order one have been studied by Shkoller in \cite{Shk2000,Shk1998}.
Preston and Misiolek showed in \cite{MP2010} that the geodesic equation is locally well-posed 
for Sobolev metrics of integer order $k\geq 1$ and globally well-posed if $k>\frac{d}{2}+1$, see also \cite{EbMa1970,BHM2011,K2016}.
Recently Bruveris and Vialard \cite{BrVi2014} showed metric and geodesic completeness on the Banach manifold $\mathcal{D}^s(M)$ of Sobolev diffeomorphisms, provided that the metric is smooth and strong. 
%This is in particular true for the class of integer-order Sobolev metrics of order $q>\frac{d}{2}+1$, 
%see \cite{EbMa1970,MP2010,}. 

The following theorem collects known results about right-invariant geodesic equations on $\Diff(M)$ and $\mathcal{D}^s(M)$.
\begin{theorem}\label{thm:wellposednessDiff}
Let $A$ be a positive, elliptic, differential operator of order $2k$, self-adjoint with respect to the $L^2$ inner product on $\Xcal(M)$.
% Let $A$ be an inertia operator of order $2l$ that satisfies assumption~\ref{assumption:A} and 
Further, let $G$ be the right-invariant metric \eqref{eq:Gmet_DiffM} on $\Diff(M)$ induced by $A$. 
The geodesic equation (called \emph{EPDiff}), expressed in the right reduced variable $u=\varphi_t\circ\varphi^{-1}$, is given by
\begin{align}
u&=\ph_t\circ\ph^{-1} \label{eq:epdiff1} \\
u_t&=-A^{-1}\left\{\nabla_u (Au)+(\mathrm{div}\hspace{0,1cm}u)(Au)+(\nabla u)^\top (Au)\right\} , \label{eq:epdiff2}
\end{align}
where $(\nabla u)^\top$ is the pointwise adjoint of the mapping $w\mapsto \nabla_w u$ relative to the Riemannian metric $g$
% That is,
\begin{equation}
 g((\nabla u)^\top v,w)=g(v, \nabla_w u),\qquad \forall\, v,w\in\Xcal(M).
\end{equation}
We have the following results concerning existence of solutions to \eqref{eq:epdiff1}-\eqref{eq:epdiff2}.
\begin{enumerate}
 \item Let $s\geq 2k\geq 2$ and $s>\frac{d}{2}+1$. 
 Then for any initial conditions $(\ph_0,v_0)\in T\mathcal{D}^s(M)$, there exists a unique solution 
 \begin{equation}
  (\ph,v)\in C^{\infty}(J,T\mathcal{D}^s(M))
 \end{equation}
with $(\ph(0),v(0))=(\ph_0,v_0)$, defined on a non-empty, non-extendable interval $J$ containing zero.
For any $t\in J$, the solution $(\ph(t),v(t))$ depends smoothly on the initial data $(\ph_0,v_0)$. 
Moreover, for smooth initial data, i.e., $(\ph_0,v_0)\in T\Diff(M)$, the interval $J$ is the same for each $s$.
Consequently, the solution $(\ph,v)$ in this case belongs to $C^\infty(J\times M, TM)$ and the mapping $(\ph_0,v_0)\mapsto (\ph(t),v(t))$ is smooth in the Fréchet category of smooth maps.
\item Let $k > \frac{d}{2}+1$ and $s\geq 2k$. Then for any initial conditions $(\ph_0,v_0)\in T\mathcal{D}^s(M)$, the maximal interval of existence $J$ is equal to $\mathbb R$.
That is, the space $(\mathcal{D}^s(M),G)$ is geodesically complete. 
This result remains valid in the smooth Fréchet category.
% \item Let $l > \frac{d}{2}+1$. Then $(\mathcal D^l(M)_0,\on{dist}^G)$ is a complete metric space and any two elements can be joined by a minimizing geodesic.
\end{enumerate}
\end{theorem}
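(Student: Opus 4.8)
The plan is to follow the Lagrangian strategy of \citet{EbMa1970}. Working directly with the reduced equation~\eqref{eq:epdiff2} is hopeless: since $Au$ is of order $2k$, the term $A^{-1}\nabla_u(Au)$ maps $H^s$ into $H^{s-1}$, so the Eulerian vector field loses one derivative and is not even locally Lipschitz on a fixed $H^s$. Instead I would lift the geodesic equation to a second-order ordinary differential equation $\ddot\ph = S(\ph,\dot\ph)$ on the Hilbert manifold $T\mathcal{D}^s(M)$, whose right-hand side is the geodesic spray $S$ of the weak metric~\eqref{eq:Gmet_DiffM}. The decisive---and hardest---step is to prove that $S$ is a genuinely \emph{smooth} vector field $T\mathcal{D}^s(M)\to TT\mathcal{D}^s(M)$ when $s\ge 2k$ and $s>d/2+1$. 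Concretely one expresses $S$ through the inertia operator conjugated by right translation, $A_\ph\coloneqq R_\ph\circ A\circ R_\ph^{-1}$ with $R_\ph\colon h\mapsto h\circ\ph$, together with $A_\ph^{-1}$ and the derivative $\ph\mapsto A_\ph$; the miracle is that the apparent loss of one derivative in the Eulerian picture is exactly cancelled by the $\ph$-dependence, so that no derivatives are lost on the group. This is the content of the Ebin--Marsden argument and its extension to integer-order metrics on closed manifolds by \citet{MP2010}.

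Granting smoothness of $S$, part~(1) is a formal consequence of the Picard--Lindel\"of theorem for flows of smooth vector fields on Hilbert manifolds: there is a unique maximal integral curve $(\ph,v)\in C^\infty(J,T\mathcal{D}^s(M))$ through $(\ph_0,v_0)$, and, $S$ being smooth, the flow depends smoothly on the initial datum. To reach the Fr\'echet category I would invoke the \emph{no-loss-no-gain} principle: because the spray loses no derivatives, a solution with initial datum in $T\mathcal{D}^{s'}(M)$, $s'\ge s$, stays in $\mathcal{D}^{s'}(M)$ on the very interval on which the $\mathcal{D}^s$-solution exists, while conversely a blow-up in any higher norm is already visible at the level $s$. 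Hence the maximal interval $J$ is common to all admissible $s$, and for smooth data the solution lies in $\bigcap_{s\ge 2k}C^\infty(J,T\mathcal{D}^s(M))=C^\infty(J\times M,TM)$, with smooth dependence in the inverse-limit Fr\'echet topology.

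For the global statement~(2) the extra hypothesis $k>\tfrac d2+1$ enters through two facts. First, as for any Riemannian metric, the speed of a geodesic is constant, so $G_{\id}(u(t),u(t))=G_{\id}(u_0,u_0)$ is conserved; since $A$ is positive elliptic of order $2k$ this quantity is equivalent to $\norm{u(t)}_{H^k}^2$, whence $\norm{u(t)}_{H^k}$ is bounded uniformly in $t$. Second, $k>\tfrac d2+1$ yields the Sobolev embedding $H^k(M)\hookrightarrow C^1(M)$, so that $\sup_{t\in J}\norm{u(t)}_{C^1}<\infty$. Feeding this uniform $C^1$-bound into the standard commutator (Kato--Ponce type) energy estimate for the transport-like equation~\eqref{eq:epdiff2} yields a Gr\"onwall inequality $\tfrac{d}{dt}\norm{u(t)}_{H^s}^2\le C\,\norm{u(t)}_{C^1}\norm{u(t)}_{H^s}^2$, so $\norm{u(t)}_{H^s}$ grows at most exponentially and never blows up in finite time. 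Therefore $J=\RR$, which is the geodesic completeness of $(\mathcal{D}^s(M),G)$; this is precisely the completeness theorem of \citet{BrVi2014}. Combining it with the no-loss-no-gain transfer of part~(1) extends global existence and smooth dependence to all $s\ge 2k$ and to the smooth Fr\'echet category. Throughout, the only genuinely hard estimate is the smoothness of the spray in the first paragraph; once it is available, both the local theory and the completeness argument are essentially formal.
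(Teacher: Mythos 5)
Your proposal is correct and takes essentially the same route as the paper, which states this theorem as a collection of known results and relies on exactly the arguments you reconstruct: smoothness of the geodesic spray on $T\mathcal{D}^s(M)$ via the right-translated inertia operator, Picard--Lindel\"of on the Hilbert manifold, and the Ebin--Marsden no-loss-no-gain lemma for part (1) (cf.\ \citet{EbMa1970} and \citet{MP2010}), and for part (2) the conserved energy bound on $\norm{u(t)}_{H^k}$ combined with the embedding $H^k\hookrightarrow C^1(M)$ for $k>\frac{d}{2}+1$ feeding a commutator/Gr\"onwall estimate (cf.\ \citet{MP2010} and \citet{BrVi2014}). No gaps worth flagging.
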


\section{Descending Riemannian metrics}
The fundamental notion in this paper is that the concept of \emph{Riemannian submersions} provide a (formal) process for obtaining Riemannian structures on the space of probability densities from invariant Riemannian structures on the space of diffeomorphisms.
In this section we review this process.
First we give the framework in the setting of finite-dimensional manifolds.
Thereafter, we show the necessary steps to obtain corresponding rigorous results in the infinite-dimensional Fréchet topology of interest.

\subsection{Riemannian submersions}\label{sub:submersions}

Let $E$ and $B$ be finite-dimensional, smooth manifolds, and let $\pi:E\rightarrow B$ be a submersion.
%  of smooth manifolds, 
% that is, $T\pi:TE \rightarrow TB$ is surjective. 
Then
$$\on{Ver}=\on{ker}(T\pi) \subset TE$$
is called the \emph{vertical subbundle}. 
If $E$ is Riemannian we can define the \emph{horizontal subbundle} as the complement of $\on{Ver}$
$$\on{Hor}=\on{Ver}^\bot \subset TE.$$
% If the horizontal bundle exists then every 
A vector $X \in TE$ can then be decomposed uniquely in vertical and horizontal components as
$$X=X^{\on{ver}}+X^{\on{hor}}$$
and the mapping 
\begin{equation}\label{sh:sub:eq1}
	T_x \pi|_{\on{Hor}_x}\colon\on{Hor}_x\rightarrow T_{\pi(x)}B
\end{equation}
is an isomorphism of vector spaces for all $x\in E$. 

If both $E$ and $B$ are Riemannian manifolds and if the mapping \eqref{sh:sub:eq1} is an isometry, 
then $\pi$ is called a \emph{Riemannian submersion}.
For Riemannian submersion we have the following theorem that connects the geometry on the 
base space with the horizontal geometry on the top space.

\begin{proposition}\label{Riemanniasubmersions}
	Consider a Riemannian submersion $\pi\colon E\rightarrow B$, 
	and let $\gamma\colon [0,1]\rightarrow E$ be a geodesic curve in $E$.
	\begin{enumerate}
		\item If $\dot \gamma(t_0)$ is horizontal at $t_0\in[0,1]$, then $\dot \gamma(t)$ is horizontal at all $t\in [0,1]$. 
		\item If $\dot \gamma(t)$ is  horizontal, then $\pi \circ \gamma(t)$ is a geodesic curve in $B$.
		% \item If every curve in $B$ can be lifted to a horizontal curve in $E$, 
		% then there is a one-to-one correspondence between curves in $B$ and horizontal curves in $E$. 
		% This implies that instead of solving the geodesic equation on $B$ one can equivalently solve
		% the equation for horizontal geodesics in $E$.
	\end{enumerate}
\end{proposition}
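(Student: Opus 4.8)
The plan is to prove the two parts separately with the standard O'Neill machinery: an ODE-uniqueness argument for (1), and O'Neill's identity relating the two Levi--Civita connections for (2). Throughout, write $\nabla$ for the Levi--Civita connection of $E$, $\bar\nabla$ for that of $B$, and recall O'Neill's fundamental tensors $A$ and $T$, which encode the vertical and horizontal parts of $\nabla$ in horizontal resp.\ vertical directions.

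For part (1), I would track the vertical energy $f(t) := \langle \dot\gamma^{\on{ver}}(t), \dot\gamma^{\on{ver}}(t)\rangle$ along the geodesic. Differentiating and using that only the vertical part of the covariant derivative contributes, $f'(t) = 2\langle (\nabla_{\dot\gamma}\dot\gamma^{\on{ver}})^{\on{ver}}, \dot\gamma^{\on{ver}}\rangle$. Since $\gamma$ is a geodesic, $\nabla_{\dot\gamma}\dot\gamma = 0$, so $(\nabla_{\dot\gamma}\dot\gamma^{\on{ver}})^{\on{ver}} = -(\nabla_{\dot\gamma}\dot\gamma^{\on{hor}})^{\on{ver}}$, and the right-hand side can be expressed through the tensors $A$ and $T$ evaluated on the velocity. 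The decisive cancellation is the skew-symmetry $A_X X = 0$ for horizontal $X$, which kills the term $A_{\dot\gamma^{\on{hor}}}\dot\gamma^{\on{hor}}$; what survives is the term $T_{\dot\gamma^{\on{ver}}}\dot\gamma^{\on{hor}}$, which is \emph{linear} in $\dot\gamma^{\on{ver}}$. Using the metric skew-symmetry of $T$ then gives $f'(t) = 2\langle \dot\gamma^{\on{hor}}, T_{\dot\gamma^{\on{ver}}}\dot\gamma^{\on{ver}}\rangle$ (up to sign). Because a geodesic has constant speed, $\|\dot\gamma^{\on{hor}}\| \le \|\dot\gamma\|$ is bounded, and $T$ is a bounded tensor along the compact image of $\gamma$, so $|f'(t)| \le C f(t)$ for some constant $C$. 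Grönwall's inequality (equivalently, uniqueness for the homogeneous linear ODE satisfied by $f$) forces $f \equiv 0$ as soon as $f(t_0) = 0$, which is exactly horizontality of $\dot\gamma$ for all $t$.

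For part (2), set $\bar\gamma := \pi\circ\gamma$; then $\dot{\bar\gamma} = T\pi(\dot\gamma)$, and since $\dot\gamma$ is horizontal and $T\pi|_{\on{Hor}}$ is a fiberwise isometry, $\bar\gamma$ has constant speed. To identify $\bar\gamma$ as a geodesic I would use O'Neill's connection identity: for horizontal lifts $X, Y$ of vector fields $\bar X, \bar Y$ on $B$, the horizontal part of $\nabla_X Y$ is the horizontal lift of $\bar\nabla_{\bar X}\bar Y$, i.e.\ $T\pi\big((\nabla_X Y)^{\on{hor}}\big) = \bar\nabla_{\bar X}\bar Y$. This follows from the Koszul formula on $E$ together with the isometry property \eqref{sh:sub:eq1}. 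Applying it to the velocity field gives $\bar\nabla_{\dot{\bar\gamma}}\dot{\bar\gamma} = T\pi\big((\nabla_{\dot\gamma}\dot\gamma)^{\on{hor}}\big) = T\pi(0) = 0$, so $\bar\gamma$ is a geodesic.

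The main obstacle is twofold and lies in the technical bookkeeping rather than the conceptual scheme. First, in (1) one must resist the temptation to argue by conservation of $f$: the vertical energy is \emph{not} conserved along $\gamma$, and the substitute is the differential inequality $|f'| \le Cf$, which works only because of the cancellation $A_{\dot\gamma^{\on{hor}}}\dot\gamma^{\on{hor}} = 0$. Second, in both parts $\dot\gamma$ is a vector field along a curve rather than a globally defined horizontal vector field, so the tensorial and connection identities must be applied with care — either by locally extending the velocity to horizontal lifts or by phrasing everything intrinsically along $\gamma$ — and this is where one must be most careful to avoid a spurious loss of well-definedness.
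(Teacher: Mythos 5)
Your proof is correct, but note that the paper itself gives no proof at all for this proposition --- it simply refers to \cite[\S\!~26]{Mic2008} --- so the only meaningful comparison is with the standard literature argument, which is built on exactly the O'Neill tensor calculus you invoke. Your part (1) is the standard vertical-energy argument and it goes through: the decisive points are indeed $A_XX=0$ for horizontal $X$ and the fact that the surviving term is tensorial (bilinear) in the velocity components, so that $|f'|\le Cf$ on the compact image of $\gamma$ and Gr\"onwall, run in \emph{both} time directions from $t_0$ (which $|f'|\le Cf$ with $f\ge 0$ permits), forces $f\equiv 0$. The extension worry you flag at the end has a clean resolution you could make explicit: writing $\dot\gamma^{\mathrm{ver}}=P\dot\gamma$ with $P$ the vertical projection, viewed as a smooth $(1,1)$-tensor field on $E$, the geodesic equation gives $\nabla_{\dot\gamma}(P\dot\gamma)=(\nabla_{\dot\gamma}P)\dot\gamma$, which is manifestly pointwise in $\dot\gamma$; expanding $\nabla P$ in the O'Neill tensors $A$ and $T$ then yields your formula with no extension of fields along the curve needed. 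In part (2) you should dispose of the trivial case $\dot\gamma\equiv 0$ and observe that otherwise the constant nonzero speed of $\bar\gamma$ lets you extend $\dot{\bar\gamma}$ locally and realize $\gamma$ as an integral curve of the horizontal lift, after which O'Neill's identity $T\pi\bigl((\nabla_XY)^{\mathrm{hor}}\bigr)=\bar\nabla_{\bar X}\bar Y$ finishes as you say. For comparison, the route more commonly taken in the cited source proves first that horizontal lifts of geodesics in $B$ are geodesics in $E$, and then obtains both (1) and (2) simultaneously from uniqueness of geodesics with given initial data; that argument avoids the Gr\"onwall estimate entirely, while yours has the advantage of being local and of isolating exactly which curvature-type quantities ($A$ and $T$) control the failure of horizontality for non-geodesic curves.
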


\begin{proof}
	See \cite[\S\!~26]{Mic2008}.	
\end{proof}

A Riemannian metric $g$ on $E$ is called \emph{descending} if it induces a Riemannian metric on~$B$.
More precisely, $g$ is descending if and only if, given two vectors $X\in \on{Hor}_x$ and $X'\in \on{Hor}_{x'}$ with $\pi(x)=\pi(x')$ and $T_x\pi\cdot X = T_{x'}\pi \cdot X'$, then $g_x(X,X) = g_{x'}(X',X')$.
The isomorphism \eqref{sh:sub:eq1} then defines a Riemannian metric $\bar g$ on $B$.
% , constructed so that $\pi$ is a Riemannian submersion.

Consider now the case when $\pi\colon E\to B$ is a principal $H$-bundle, so that $B\simeq E/H$ where $H$ is a Lie group acting on $E$.
$H$-invariant Riemannian metrics on $E$ are special cases of descending metrics.

\begin{proposition}\label{prop:descending_metric}
	Let $\pi\colon E\to B$ be a principal $H$-bundle, and let $g$ be an $H$-invariant Riemannian metric on $E$
	\begin{equation}
		g_x(X,X) = g_{x\cdot h}(X\cdot h,X\cdot h), \quad \forall\, X\in T_x E, h\in H.
	\end{equation}
	Then $g$ is descending, thereby inducing a unique Riemannian metric on $B$ such that $\pi$ is a Riemannian submersion.
\end{proposition}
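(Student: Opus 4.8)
The plan is to reduce everything to a single implication: that an $H$-invariant metric satisfies the descending condition recalled just above the statement. Once $g$ is known to be descending, the induced metric $\bar g$ on $B$ and the Riemannian-submersion property are furnished by the construction already discussed before the proposition (the isomorphism \eqref{sh:sub:eq1} transports $g$ restricted to $\on{Hor}$ down to $B$). So the whole content to be established is: $H$-invariant $\Rightarrow$ descending.

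First I would record how the $H$-action interacts with the two subbundles. Since $\pi$ is the orbit projection of the principal bundle, its fibers are exactly the $H$-orbits, so $\pi(x\cdot h)=\pi(x)$ for every $h\in H$. Differentiating the identity $\pi\circ(\,\cdot\,h)=\pi$ at $x$ yields $T_{x\cdot h}\pi\cdot(X\cdot h)=T_x\pi\cdot X$ for all $X\in T_xE$; in particular the action map $X\mapsto X\cdot h$ carries $\on{Ver}_x=\ker T_x\pi$ into $\on{Ver}_{x\cdot h}$, and since $h\mapsto h^{-1}$ supplies an inverse it restricts to an isomorphism $\on{Ver}_x\to\on{Ver}_{x\cdot h}$. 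Because $g$ is $H$-invariant, this action map is moreover a linear isometry of $(T_xE,g_x)$ onto $(T_{x\cdot h}E,g_{x\cdot h})$, and an isometry sends orthogonal complements to orthogonal complements; hence it also maps $\on{Hor}_x=\on{Ver}_x^\bot$ isomorphically onto $\on{Hor}_{x\cdot h}=\on{Ver}_{x\cdot h}^\bot$.

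Next I would verify the descending condition directly. Take horizontal vectors $X\in\on{Hor}_x$ and $X'\in\on{Hor}_{x'}$ with $\pi(x)=\pi(x')$ and $T_x\pi\cdot X=T_{x'}\pi\cdot X'$. Freeness and transitivity of the $H$-action on fibers give a \emph{unique} $h\in H$ with $x'=x\cdot h$. By the previous step $X\cdot h\in\on{Hor}_{x'}$, and $T_{x'}\pi\cdot(X\cdot h)=T_x\pi\cdot X=T_{x'}\pi\cdot X'$; since \eqref{sh:sub:eq1} is an isomorphism on $\on{Hor}_{x'}$, this forces $X\cdot h=X'$. Invoking $H$-invariance once more,
\[
  g_{x'}(X',X')=g_{x\cdot h}(X\cdot h,X\cdot h)=g_x(X,X),
\]
which is exactly the descending property. (Running the same argument on two horizontal lifts shows the full bilinear form is preserved, so $\bar g_{\pi(x)}(T_x\pi\cdot X,T_x\pi\cdot Y):=g_x(X,Y)$ is well defined; smoothness follows from local sections of the principal bundle.) By construction $T_x\pi|_{\on{Hor}_x}$ is then an isometry onto $(T_{\pi(x)}B,\bar g)$, i.e.\ $\pi$ is a Riemannian submersion, and this very requirement fixes $\bar g$ on each pair of vectors through their horizontal lifts, giving uniqueness.

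I expect the crux to be the second paragraph: the observation that $H$-invariance makes the action an \emph{isometry} and that isometries preserve the horizontal distribution, combined with using the unique $h$ to identify the two competing horizontal representatives $X$ and $X'$ as $X'=X\cdot h$. This is precisely where the principal-bundle hypothesis is indispensable, since it is freeness and fiber-transitivity that convert the abstract descending condition into a statement about the isometry $X\mapsto X\cdot h$; without them one could only compare projections, not the lifts themselves.
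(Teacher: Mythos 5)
Your proof is correct. Note that the paper offers no proof of this proposition at all---it is stated as a standard finite-dimensional fact---and your argument (differentiate $\pi\circ(\,\cdot\, h)=\pi$ to see the action preserves $\on{Ver}$, use invariance to make the action a fiberwise isometry and hence preserve $\on{Hor}=\on{Ver}^\bot$, then use freeness and fiber-transitivity together with injectivity of \eqref{sh:sub:eq1} to force $X'=X\cdot h$) is exactly the standard argument the authors implicitly invoke. The only caveat worth keeping in mind is that your reasoning relies on the finite-dimensional setting of this subsection, where $\on{Ver}^\bot$ exists and \eqref{sh:sub:eq1} is automatically an isomorphism; as the paper stresses afterwards, in the Fréchet setting both facts fail in general and must be verified by hand, which is what \autoref{lem:hor_exists} and \autoref{hor:lift} do.
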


% If $E$ is Riemannian with an $H$-invariant metric, then 

\subsection{Descending metrics on \texorpdfstring{$\Diff(M)$}{Diff(M)} }\label{sub:submersions_infinite}
We shall now discuss the framework in \autoref{sub:submersions}, but in the infinite-dimensional case $E=\Diff(M)$ and $B=\Prob(M)$.
In this setting, one has to carefully address some questions that are self-evident in the finite-dimensional case.
We set out from the smooth Fréchet principal bundle structure $\pi_l\colon\Diff(M)\to\Prob(M)$ (or $\pi_r\colon\Diff(M)\to\Prob(M)$) given in \autoref{thm:principal_bundle_diff_dens}.
Recall the vertical subbundle $\on{Ver}^l$ (or $\on{Ver}^r$) given in \autoref{lem:vertical_bundle}.

Given a Riemannian metric on $\Diff(M)$, the first complication is that the horizontal bundle might be empty even if the vertical bundle is a proper subbundle of the tangent bundle.
In other words, the complement of the vertical bundle might not exist.
In the setting considered here, with the action of $\Diff(M)$ on $\Prob(M)$, it is, however, often possible to obtain the complement of the vertical bundle within the smooth Fréchet category.

\begin{remark}
	An example where the complement exists only in a completion of the space is when the projection is given by the action of $\Diff(M)$ on a fixed point $x_0\in M$, i.e., $\pi^{x_0}:\Diff(M)\rightarrow M$ with $\pi^{x_0}(\varphi)=\varphi(x_0)$.
	The vertical bundle then consists of smooth vector fields vanishing at $x_0$.
	If the Riemannian metric on $\Diff(M)$ is of finite order $k$, then the horizontal bundle consists of vector fields with finite regularity at $x_0$ (the $k$-th derivative is a delta distribution).
	In this example, the horizontal bundle therefore does not exist in the smooth Fréchet category.	
\end{remark}

The second complication is that the statements of \autoref{Riemanniasubmersions} must be validated explicitly, typically by proving that the mapping \eqref{sh:sub:eq1} is an isomorphism.
One can then check that initially horizontal geodesics remain horizontal throughout their existence interval, and that such horizontal geodesics on $\Diff(M)$ project to geodesic curves on $\Prob(M)$.
% \todo{MB: The second complication appears somehow late. We mention now Fisher-Rao and Wasserstein first and only afterwards mention the second complication, which might be a bit late}

Let us now give two infinite-dimensional examples where the two complications can be addressed.
The first example is when the Riemannian structure on $\Diff(M)$ is given by the $L^2$-type metric
\begin{equation}
	G_{\varphi}(u\circ\varphi, u\circ\varphi) = \int_M g(u,u)\, \varphi_*\vol.
\end{equation}
This metric is not fully right-invariant, but it is right-invariant with respect to the subgroup $\Diffvol(M)$.
Thus, it (formally) descends to $\Prob(M)$ through the projection~$\pi_l$.
In fact, the corresponding Riemannian metric on $\Prob(M)$ is exactly the Otto metric, discussed in \autoref{sub:summary} above, which is fundamental in the dynamic and geometric approach to optimal mass transport (see the work of \citet{Ot2001} and \citet{BeBr2000}).
From the Helmholtz decomposition of smooth vector fields, it is straightforward to show that the horizontal bundle in this case is given by the right translated gradient vector fields
\begin{equation}
	\on{Hor}_\varphi = \{\nabla p\circ\varphi\mid p \in C^\infty(M) \}.
\end{equation}

There is also an example of a Riemannian metric on $\Diff(M)$ descending to $\Prob(M)$ through the right projection $\pi^r$.
Indeed, a right-invariant family of such metrics are given in \cite{Mo2014}.
In this case, the corresponding metric on $\Diff(M)$ is the Fisher--Rao metric, instrumental in information geometry.
Since right projection is used, the right-invariance is not exhausted when taking the quotient, and is therefore still present after the projection. 
Consequently, the Fisher--Rao metric is invariant with respect to the action of $\Diff(M)$ (in contrast to the Otto metric which is not $\Diff(M)$-invariant).

\section{Right-invariant metrics and the left projection}\label{sec:right_inv_metrics}
In this section we study the framework of \autoref{sub:submersions_infinite} in the context of right-invariant Sobolev type Riemannian metrics on $\Diff(M)$ and the left projection $\pi_l$ onto $\Prob(M)$. 
This yields a new class of Riemannian metrics on $\Prob(M)$.
We resolve the infinite-dimensional complications mentioned in \autoref{sub:submersions_infinite}, thereby allowing us to make use of the deep results reviewed in \autoref{Ap:rightinvariant} to prove geodesic completeness.
% To rigorously define new Riemannian structures on  the space of probability densities using this construction 
% and  to make use of deep results on global existence for right-invariant metrics on $\Diff(M)$ we will need  first to address the complications mentioned in \autoref{sub:submersions_infinite}.

Let $G$ be a right-invariant Riemannian metric on $\Diff(M)$ as introduced in  \autoref{Ap:rightinvariant}, i.e., given at the identity by
\begin{equation}\label{eq:right_inv_metricG}
 G_{\operatorname{\id}}(X,Y)=\int_M g\left(A X,  Y\right)\, \vol=\int_M g\left(X,  AY\right)\, \vol\;,
\end{equation}
where $A\colon \Xcal(M)\to \Xcal(M)$ is a positive, elliptic, differential operator, self-adjoint with respect to the $L^2$ inner product on $\Xcal(M)$.
% We want to emphasize here, that the same statement is
% not true for right-invariant metrics and the right projection, where the descending property is an extremely rare property, see the work of \citet{} and \citet{}.

We first address the question of existence of the horizontal bundle.

% We will  show that for right-invariant metrics and the left projection the vertical bundle is complimented within the smooth Fréchet category --  thus giving a positive answer to the first complication sketched in section
% \autoref{sub:submersions_infinite}:
\begin{lemma}\label{lem:hor_exists}
Let $G$ be a right-invariant metric on $\Diff(M)$ of the form~\eqref{eq:right_inv_metricG}.
Then the horizontal bundle with respect to the left projection $\pi_l$ exists in the Fréchet topology as a complement of the vertical bundle~$\on{Ver}^l$. 
It is given by
\begin{align}
 \operatorname{Hor}_{\varphi}
 &=\left\{\left(A^{-1} (\rho \nabla p)\right)\circ\varphi\mid p\in C^{\infty}(M)\right\},
\end{align}
where $\rho=\on{Jac}(\varphi^{-1})$. 
Thus, every vector $X\in T_\varphi\Diff(M)$ has a unique decomposition
$X=X^{\operatorname{Ver}}+X^{\operatorname{Hor}}$ with $X^{\operatorname{Ver}}\in\operatorname{Ver}^l_{\operatorname{\varphi}}(\pi)$ and $X^{\operatorname{Hor}}\in \operatorname{Hor}_{\varphi}(\pi)$.
\end{lemma}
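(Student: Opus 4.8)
The plan is to use right-invariance to reduce the whole statement to a single orthogonal decomposition of $\Xcal(M)$ at the identity. Since $G$ is right-invariant, right translation $R_\varphi\colon u\mapsto u\circ\varphi$ is an isometry from $(\Xcal(M),G_{\id})$ onto $(T_\varphi\Diff(M),G_\varphi)$, and by \autoref{lem:vertical_bundle} it carries the subspace $\mathcal V\coloneqq\{v\in\Xcal(M)\mid \divv(\rho v)=0\}$ onto $\operatorname{Ver}^l_\varphi$, where $\rho=\Jac(\varphi^{-1})$. Likewise it carries $\mathcal H\coloneqq\{A^{-1}(\rho\nabla p)\mid p\in C^\infty(M)\}$ onto the claimed horizontal space. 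Thus it suffices to prove that $\Xcal(M)=\mathcal V\oplus\mathcal H$ is an orthogonal direct sum with respect to $G_{\id}$, \emph{within the smooth category}.

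First I would check the orthogonality $\mathcal H\perp_{G_{\id}}\mathcal V$, which is the easy half: for $v\in\mathcal V$ and $p\in C^\infty(M)$, using self-adjointness of $A$ and integration by parts on the closed manifold $M$,
\[
 G_{\id}\big(A^{-1}(\rho\nabla p),v\big)=\int_M g(\rho\nabla p,v)\,\vol=\int_M \rho\,g(\nabla p,v)\,\vol=-\int_M p\,\divv(\rho v)\,\vol=0 .
\]
This shows $\mathcal H\subseteq\mathcal V^{\perp}$ and, taking $v=A^{-1}(\rho\nabla p)\in\mathcal H$ with $\divv(\rho v)=0$, that $\mathcal V\cap\mathcal H$ is trivial (it forces $\nabla p\equiv 0$ since $\rho>0$). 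Hence the sum is direct and orthogonal; what remains is surjectivity, i.e.\ existence of the decomposition.

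The crux is to write an arbitrary $u\in\Xcal(M)$ as $u=v+A^{-1}(\rho\nabla p)$ with $\divv(\rho v)=0$. Applying $\divv(\rho\,\cdot)$ annihilates the vertical part and reduces this to solving the scalar equation $L_\varphi p=\divv(\rho u)$, where
\[
 L_\varphi p\coloneqq \divv\!\big(\rho\,A^{-1}(\rho\nabla p)\big).
\]
Both $\divv(\rho u)$ and $L_\varphi p$ have vanishing mean. I would argue that $L_\varphi$ is a self-adjoint, elliptic operator whose kernel is exactly the constants: self-adjointness and the sign of $\int_M p\,L_\varphi p\,\vol=-\int_M g(\rho\nabla p,A^{-1}(\rho\nabla p))\,\vol\le 0$ follow from self-adjointness and positivity of $A^{-1}$, with equality iff $\nabla p\equiv0$; ellipticity follows since its principal symbol is $-\rho^2\,g(\xi^{\sharp},\sigma_A(\xi)^{-1}\xi^{\sharp})\neq0$ for $\xi\neq0$, as $A$ is a positive elliptic operator. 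Because $A^{-1}$ enters, $L_\varphi$ is a pseudodifferential operator of order $2-2k$, but it is nonetheless a classical elliptic operator on the closed manifold $M$. By the Fredholm alternative its range is the $L^2$-orthogonal complement of its kernel, namely the mean-zero functions $C^\infty_0(M)$; elliptic regularity then yields a solution $p\in C^\infty(M)$, unique up to additive constants, and $v\coloneqq u-A^{-1}(\rho\nabla p)$ completes the decomposition.

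I expect the solvability of $L_\varphi p=\divv(\rho u)$ in the smooth category to be the main obstacle, and this is exactly why the statement is genuinely about the Fréchet topology: it is elliptic regularity for $L_\varphi$ that keeps the horizontal part $A^{-1}(\rho\nabla p)$ smooth, so the complement exists within $C^\infty$ rather than only in a Sobolev completion (contrast the evaluation-map remark above). For the full bundle statement one further needs $p$, and hence the splitting, to depend smoothly on $\varphi$ through $\rho=\Jac(\varphi^{-1})$; since $L_\varphi^{-1}$ is nonlocal this is a tameness issue best handled in the Nash--Moser framework of \citet{Ha1982}, but the pointwise decomposition rests entirely on the elliptic estimate above.
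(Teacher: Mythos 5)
Your proof is correct, but it takes a genuinely different route from the paper's. The paper characterizes $\operatorname{Hor}_\varphi$ by applying the Hodge decomposition to the rescaled momentum $w=\tfrac{1}{\rho}Au=\nabla p+\tilde w$, so that $u=A^{-1}(\rho\nabla p)+A^{-1}(\rho\tilde w)$, and then certifies non-horizontality when $\tilde w\neq 0$ by pairing against the vertical test field $v=\tfrac{1}{\rho}\tilde w$, which gives $\int_M g(\rho\tilde w,v)\,\vol=\norm{\tilde w}_{L^2}^2>0$. You instead reduce everything to the solvability of the scalar equation $L_\varphi p=\divv(\rho u)$, i.e.\ you prove inline what the paper isolates as \autoref{lem:Lrho_inverse} (self-adjointness, weak ellipticity via multiplicativity of the principal symbol, kernel equal to the constants, Fredholm index zero, elliptic regularity in the smooth category); your symbol computation, sign argument for the kernel, and the mean-zero compatibility of the right-hand side are all sound, and your shared orthogonality computation via integration by parts is the same as the paper's. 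What each approach buys: the paper's Hodge argument identifies $\operatorname{Hor}_\varphi=(\operatorname{Ver}^l_\varphi)^\perp$ quickly and defers the Fredholm analysis to \autoref{lem:Lrho_inverse}, where it is needed anyway for the horizontal lift (\autoref{hor:lift}); your argument is self-contained and, notably, is the one that directly establishes the lemma's final claim of a unique decomposition $X=X^{\operatorname{Ver}}+X^{\operatorname{Hor}}$ --- the paper's Hodge splitting is \emph{not} the vertical--horizontal splitting, since $A^{-1}(\rho\tilde w)$ need not satisfy $\divv\bigl(\rho\, A^{-1}(\rho\tilde w)\bigr)=0$, so in the paper that claim implicitly rests on the later invertibility of $L_\rho$, exactly the step you supply. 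Two cosmetic remarks: your order count $2-2k$ presumes $A$ of order $2k$ (the paper's \autoref{lem:Lrho_inverse} takes order $2k+2$, making $L_\rho$ of order $-2k$, and your $L_\varphi$ differs from the paper's $L_\rho$ by a sign), and your observation that surjectivity of the splitting is the crux is apt --- for a weak Riemannian metric the orthogonal complement need not span, which is precisely why the elliptic solvability, rather than the orthogonality computation, carries the content of the lemma.
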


\begin{proof}
Let $h=u\circ\varphi \in T_{\varphi}\Diff(M)$. 
Then $h\in \operatorname{Hor}_{\varphi}(\pi)$ if and only if
\begin{equation}
G_{\varphi}(h,k)=0, \qquad \forall k\in  \operatorname{Ver}^l_{\varphi}(\pi)\;.
\end{equation}
Let $k=v\circ\varphi$ and $\rho = \Jac(\varphi^{-1})$. 
Using the characterization of $\operatorname{Ver}^l_{\varphi}(\pi)$ in \autoref{lem:vertical_bundle} this yields
\begin{equation}\label{eq:hor_char}
G_{\varphi}(h,k)=\int_M g(Au,v) \vol=0, \qquad \forall v\in \Xcal(M) \text{ with } \operatorname{div}(\rho v)=0 
\end{equation}
Consider now the vector field $w=\frac1{\rho}Au$.
The Hodge decomposition for $w$ yields
\begin{equation}
w=\nabla p + \tilde w ,
\end{equation}
with unique components $p\in  C^{\infty}(M)/\RR$ and $\tilde w\in \Xcalvol(M) = \{u\in\Xcal(M)\mid\divv u =0 \}$.
Thus, we can decompose $u$ as
\begin{align}\label{eq:u_Ainv}
u= A^{-1}(\rho\nabla p +\rho \tilde w),
\end{align}
with both $A^{-1}(\rho\nabla p)$ and $A^{-1}(\rho \tilde w)$ in $\Xcal(M)$. 
Plugging \eqref{eq:u_Ainv} into \eqref{eq:hor_char} then yields
\begin{equation}
G_{\varphi}(h,k)=\int_M g(\rho\nabla p +\rho \tilde w,v) \vol = \int_M g(\rho\nabla p,v) \vol +\int_M g(\rho \tilde w,v) \vol \; .
\end{equation}
Using integration by parts, the first term vanishes
\begin{equation}
 \int_M g(\rho\nabla p,v) \vol =\int_M g(\nabla p,\rho v) \vol=-\int_M  p\operatorname{div}(\rho v) \vol=0.
 %-\int_M  f. \operatorname{div}^{\varphi_*\vol}(Y) \varphi_*\vol =0
\end{equation}
Thus, $k=u\circ\varphi$ is horizontal if $u$ is of the form $A^{-1}(\rho\nabla f)$.
It remains to show that if $\tilde w\neq 0$, then $u\circ\varphi$ is not horizontal.
For this, we note that $v=\frac{1}{\rho}\tilde w$ satisfies $\operatorname{div}(\rho v)=0$ and
\begin{equation}
	\int_M g(\rho\tilde w,v)\vol = \norm{\tilde w}_{L^2}^2 .
\end{equation}
This concludes the characterization of the horizontal bundle.
\end{proof}

A consequence of \autoref{lem:hor_exists} is that the analogue of \autoref{prop:descending_metric}  is valid in our infinite-dimensional situation: the Riemannian metric $G$ induces a Riemannian metric on $\Prob(M)$.
To see what the induced metric is, we need to calculate the horizontal lift of a tangent vector $\dot \mu\in T_\mu\Prob(M)$.
To this end, we introduce a field of pseudo differential operators over $P^\infty(M)$ given by
% $L_{\rho}$, which will appear in the formula for the horizontal lift and thus in the formula for the metric $\bar G$:
\begin{equation}\label{Lrho}
L_{\rho}\colon\begin{cases} 
      C^{\infty}(M)/\mathbb R&\longrightarrow C^{\infty}_0(M)  \\
      p&\longmapsto -\on{div}(\rho A^{-1}(\rho\nabla p))\;.\\
   \end{cases}   
\end{equation}
Geometrically, one should think of the field $L_\rho$ as the inverse of a Legendre transform, identifying (the smooth part of) the cotangent bundle $T^* \Prob(M)\simeq P^\infty(M)\times C^\infty(M)/\RR$ with the tangent bundle $T\Prob(M)\simeq P^\infty(M)\times C^\infty_0(M)$.

% We also need the following result concerning the invertibility of $L_{\rho}$.
\begin{lemma}\label{lem:Lrho_inverse}
Let $A$ be a positive, elliptic, differential operator of order $2k+2$, self-adjoint with respect to 
the $L^2$ inner product. 
For any $\rho\in P^\infty(M)$ the pseudo differential operator operator $L_{\rho}$ of order $-2k$ defined in \eqref{Lrho} is an isomorphism.
% where the inverse of $L_{\rho}$ is given by
% \begin{equation}
% \bar A_{\rho}:=L_{\rho}^{-1}: \begin{cases} 
%       C^{\infty}(M)/\mathbb R&\to C^{\infty}(M)/\mathbb R  \\
%       f&\mapsto \Delta^{-1} (\on{div} (\frac{1}{\rho} A \frac{1}{\rho}\nabla (\Delta^{-1} f)))\;.\\
%    \end{cases}   
% \end{equation}
% is invertible.
\end{lemma}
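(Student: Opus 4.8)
The plan is to show that $L_\rho$ is an \emph{elliptic, self-adjoint, and positive} pseudodifferential operator, and then to invoke Fredholm theory for such operators on the closed manifold $M$. First I would check that $L_\rho$ is well defined on the quotient $C^\infty(M)/\RR$ with values in $C^\infty_0(M)$: constants lie in the kernel since $\nabla$ annihilates them, and the image consists of functions with vanishing mean because $L_\rho p$ is a divergence, so $\int_M L_\rho p\,\vol = 0$. Since $A$ is strictly positive and elliptic of order $2k+2$, its inverse $A^{-1}$ exists and is a pseudodifferential operator of order $-(2k+2)$. Writing $L_\rho = -\divv\circ\,\rho\circ A^{-1}\circ\rho\circ\nabla$ (with $\rho$ denoting multiplication) and multiplying principal symbols, one finds that $L_\rho$ has order $1+0-(2k+2)+0+1 = -2k$ and principal symbol
\begin{equation}
	\sigma_{-2k}(L_\rho)(x,\xi) = \rho(x)^2\, g\big(\xi^\sharp,\, a(x,\xi)^{-1}\xi^\sharp\big),
\end{equation}
where $a(x,\xi)$ is the positive-definite principal symbol of $A$. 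As $a(x,\xi)^{-1}$ is positive definite and $\rho>0$, this symbol is strictly positive for $\xi\neq 0$; hence $L_\rho$ is elliptic of order $-2k$.

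Next I would establish injectivity via the associated bilinear form. Integration by parts gives, for $p,q\in C^\infty(M)$,
\begin{equation}
	\int_M q\, L_\rho p\,\vol = \int_M g\big(\rho\nabla q,\, A^{-1}(\rho\nabla p)\big)\,\vol .
\end{equation}
Since $A^{-1}$ is self-adjoint, this form is symmetric, so $L_\rho$ is formally self-adjoint with respect to the $L^2$ pairing. Taking $q=p$ and setting $V=\rho\nabla p$, positivity of $A^{-1}$ yields $\int_M g(V,A^{-1}V)\,\vol\geq 0$, with equality if and only if $V=0$, i.e.\ $\nabla p=0$ (as $\rho>0$), i.e.\ $p$ is constant. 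Thus the kernel of $L_\rho$ on $C^\infty(M)$ is exactly the constants, so $L_\rho$ is injective on $C^\infty(M)/\RR$.

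Finally I would upgrade injectivity to an isomorphism using ellipticity. Extending $L_\rho$ to the Sobolev completions, ellipticity makes it a Fredholm operator $H^s(M)\to H^{s+2k}(M)$ for every $s$. Self-adjointness identifies the cokernel with the kernel: $f\perp \operatorname{im} L_\rho$ iff $\int_M f\,L_\rho p\,\vol = \int_M (L_\rho f)\,p\,\vol = 0$ for all $p$, i.e.\ $L_\rho f=0$, i.e.\ $f$ is constant. Hence the range of $L_\rho$ is precisely the mean-zero functions, and $L_\rho\colon H^s/\RR\to H^{s+2k}_0$ is a bounded bijection, thus a Banach isomorphism by the open mapping theorem. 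Since the kernel and cokernel are the (smooth) constants independently of $s$, these isomorphisms are compatible across all Sobolev scales; intersecting over $s$ and invoking elliptic regularity gives the isomorphism $L_\rho\colon C^\infty(M)/\RR\to C^\infty_0(M)$ in the Fréchet topology, with continuous inverse furnished, up to the finite-rank corrections coming from the constants, by a parametrix of order $+2k$.

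The main obstacle is the symbol computation together with the passage to the smooth category: one must treat $A^{-1}$ rigorously as a pseudodifferential operator, verify that the composition $-\divv\circ\,\rho A^{-1}\rho\nabla$ is genuinely elliptic of order $-2k$, and then ensure that the Sobolev-level isomorphisms assemble into a Fréchet isomorphism with a continuous (indeed smooth in $\rho$) inverse. The symmetry of the bilinear form is what makes the Fredholm argument clean, so identifying that form correctly is the decisive step.
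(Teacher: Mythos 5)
Your proposal is correct and follows essentially the same route as the paper: both factor $L_\rho = -\divv\circ M_\rho\circ A^{-1}\circ M_\rho\circ\nabla$ (with $M_\rho$ multiplication by $\rho$) to obtain ellipticity of order $-2k$ from multiplicativity of principal symbols, both prove injectivity modulo constants from strict positivity of the quadratic form $\int_M g\bigl(A^{-1}(\rho\nabla p),\rho\nabla p\bigr)\,\vol$, and both conclude surjectivity via Fredholm theory and pass to the smooth category by elliptic regularity. Your explicit identification of the cokernel with the constants via self-adjointness is just a spelled-out version of the paper's appeal to Fredholmness with index zero, so the two arguments coincide in substance.
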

\begin{proof}
Using integration by parts, $L_{\rho}$ is self adjoint since $A^{-1}$ is.
For any $s$ in $\mathbb N \cup \infty$ we can extend $L_{\rho}$ to a bounded linear operator $H^s(M)/\mathbb R\to H^{s+2k}_0(M)$.  
To prove that $L_\rho$ is an elliptic operator, we decompose it in its components
\begin{align}
L_{\rho}=  -\on{div}\circ M_{\rho}\circ A^{-1} \circ M_{\rho} \circ\nabla\;,
\end{align}
where $M_{\rho}$ is the multiplication operator with $\rho$. 
$M_{\rho}$ is elliptic since $\rho(x)>0$ for all $x\in M$.
Thus, $L_{\rho}$ is weakly elliptic as it is a composition of weakly elliptic operators; here one uses the fact that the principal symbol is multiplicative, see \cite[Sect.~4]{LM1989}.
As a next step, we want to determine the kernel of $L_{\rho}$. 
% Therefore we calculate:
\begin{align}
\int_M  L_{\rho}(p) p\, \vol = -\int_M \on{div}(\rho A^{-1}(\rho\nabla p)) p \vol
=\int_M g(A^{-1}(\rho\nabla p)),\rho \nabla p) \vol>0 
\end{align}
for all $p\neq [0] \in H^s(M)/\RR$. 
Here we use that 
\begin{align}
\int_M g(A^{-1}u,u) \vol>0
\end{align}
for all $u\in \Xcal(M)\backslash \{ 0\}$.
Thus $L_{\rho}$ is injective, as it is strictly positive on $H^s(M)/\mathbb R$. 
Since it is Fredholm with index zero it is also surjective.
The isomorphism result is valid for smooth functions due to elliptic regularity, see \cite[Sect.~5]{LM1989}.
\end{proof}

We now obtain an isomorphism between $\on{Hor}_\varphi$ and $T_{\pi_l(\varphi)}\Prob(M)$ analogous to the finite-dimensional case \eqref{sh:sub:eq1}.

% We are now able to calculate the formula for the horizontal lift of a vector field on $\Prob(M)$, which is the remaining ingredient to obtain the induced metric on $\Prob(M)$.
\begin{lemma}\label{hor:lift}
Let $G$ be a right-invariant metric on $\Diff(M)$ of the form~\eqref{eq:right_inv_metricG}.
Then 
\begin{equation}
	T_\varphi\pi_l|_{\on{Hor}_\varphi}\colon \on{Hor}_\varphi \to T_{\pi_l(\varphi)}\Prob(M)
\end{equation}
is an isomorphism.
% Given a tangent vector $\dot \rho\vol \in T_{\pi_l(\varphi)}\Prob(M)$, 
The inverse is given by
\begin{equation}
T_{\pi_l(\varphi)}\Prob(M) \ni \dot \rho\vol \mapsto A^{-1}(\rho \nabla p)\circ\varphi\in\on{Hor}_\varphi,
\end{equation}
where %$f\in C^{\infty}(M)/\mathbb R$ is given by
\begin{equation}
p=L_{\rho}^{-1}(\dot \rho)\;.
\end{equation}
\end{lemma}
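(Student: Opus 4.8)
The plan is to reduce the statement entirely to the isomorphism property of the operator $L_\rho$ established in \autoref{lem:Lrho_inverse}. The key observation is that, once $\on{Hor}_\varphi$ is parametrized by $p\in C^\infty(M)/\RR$ via the description in \autoref{lem:hor_exists}, the restricted tangent map $T_\varphi\pi_l|_{\on{Hor}_\varphi}$ becomes literally the operator $L_\rho$ of \eqref{Lrho}, up to the standard identifications of source and target.

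First I would set $\rho=\Jac(\varphi^{-1})$, which is precisely the probability function associated to the base point $\pi_l(\varphi)=\varphi_*\vol$, so that $L_\rho$ is the well-defined operator in \eqref{Lrho}. By \autoref{lem:hor_exists} a general horizontal vector is $h=u\circ\varphi$ with $u=A^{-1}(\rho\nabla p)$ for some $p\in C^\infty(M)$. Applying the formula for the differential of the left projection from \autoref{lem:vertical_bundle}, namely $T_\varphi\pi_l(u\circ\varphi)=-\on{div}(\rho u)\vol$, and substituting $u=A^{-1}(\rho\nabla p)$, I obtain
\begin{equation*}
	T_\varphi\pi_l(h)=-\on{div}\big(\rho A^{-1}(\rho\nabla p)\big)\,\vol=L_\rho(p)\,\vol .
\end{equation*}
Read through the $p$-parametrization of $\on{Hor}_\varphi$ and the identification $T_{\pi_l(\varphi)}\Prob(M)\simeq C^\infty_0(M)\cdot\vol$, this identifies the restricted map with $L_\rho\colon C^\infty(M)/\RR\to C^\infty_0(M)$. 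Since $A$ has order $2k+2$, \autoref{lem:Lrho_inverse} says $L_\rho$ is an isomorphism, so $T_\varphi\pi_l|_{\on{Hor}_\varphi}$ is a bijection. Injectivity of the lift $p\mapsto A^{-1}(\rho\nabla p)\circ\varphi$ is then automatic, since its composition with $T_\varphi\pi_l$ is the injective $L_\rho$; this also confirms that \autoref{lem:hor_exists} parametrizes $\on{Hor}_\varphi$ faithfully by $C^\infty(M)/\RR$. Reading off the inverse, given $\dot\rho\,\vol\in T_{\pi_l(\varphi)}\Prob(M)$ I set $p=L_\rho^{-1}(\dot\rho)$ and recover the horizontal lift $A^{-1}(\rho\nabla p)\circ\varphi$, exactly as stated.

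The remaining checks are routine: that $T_\varphi\pi_l$ lands in $T_{\pi_l(\varphi)}\Prob(M)$, i.e.\ $\int_M\on{div}(\rho u)\vol=0$, which is the divergence theorem on the closed manifold $M$; and that $\nabla p$, hence the whole lift, depends only on the class of $p$ in $C^\infty(M)/\RR$, so the parametrization is well-posed on the quotient. I expect no genuine analytic obstacle here, precisely because all the weight of the argument—ellipticity, self-adjointness, Fredholm index zero, and elliptic regularity in the Fréchet category—has already been absorbed into \autoref{lem:Lrho_inverse}. The only point requiring care is the bookkeeping of the two identifications (quotienting by constants on the source, the vanishing-mean condition on the target), so that ``$T_\varphi\pi_l=L_\rho$'' is an honest equality of maps rather than a merely formal resemblance.
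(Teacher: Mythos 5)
Your proposal is correct and follows essentially the same route as the paper's own proof: both substitute the parametrization $u=A^{-1}(\rho\nabla p)$ of $\on{Hor}_\varphi$ from \autoref{lem:hor_exists} into the formula $T_\varphi\pi_l(u\circ\varphi)=-\on{div}(\rho u)\,\vol$ from \autoref{lem:vertical_bundle}, identify the restricted tangent map with $L_\rho$, and conclude by the isomorphism property of \autoref{lem:Lrho_inverse}. Your additional bookkeeping (mean-zero target via the divergence theorem, well-definedness on $C^\infty(M)/\RR$, faithfulness of the $p$-parametrization) is left implicit in the paper but is a welcome tightening rather than a deviation.
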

\begin{proof}
The horizontal lift of a tangent vector $\dot \mu \in T_{\mu}\Prob(M) $ is the unique horizontal vector field $u$ such that
\begin{align}
T_{\varphi}\pi_l (u\circ\varphi)= \dot \mu
\end{align}
where $\varphi$ is some diffeomorphism with $\pi(\varphi)=\mu$.
Writing $\mu=\rho \vol$ and using the characterization of the horizontal bundle and the formula for $T\pi$ this yields the equation
\begin{align}
\dot \mu = -\on{div}^{\varphi_*\vol}(u).\varphi_*\vol= -\on{div}^{\varphi_*\vol}( A^{-1}(\rho\nabla p)).\varphi_*\vol = -\on{div}(\rho A^{-1}\rho\nabla p)\vol
\end{align}
Dividing by $\vol$ the above lifting equation can be rewritten as
\begin{align}
\frac{\dot \mu}{\vol} = -\on{div}(\rho A^{-1}(\rho\nabla p))=L_{\rho}(p).
\end{align}
Applying the inverse of $L_{\rho}$ to the above equation yields the desired result.
\end{proof}

% \subsection{The induced metric on \texorpdfstring{$\operatorname{Prob}(M)$}{Prob(M)} }
Using \autoref{hor:lift} we obtain the formula for the induced metric on $\Prob(M)$. 

\begin{proposition}\label{pro:descending_metric_formula}
Let $G$ be a right-invariant metric on $\Diff(M)$ of the form~\eqref{eq:right_inv_metricG} with inertia operator $A$ as in \autoref{lem:Lrho_inverse} of order $2k+2$.
Then the induced  metric on $\Prob(M)$ with respect to the projection $\pi_l$ is given by
 \begin{align}\label{met:dens}
\bar G_{\mu}(\dot \mu,\dot \mu)&= %\bar G_{\rho\vol}(\dot \rho \vol,\dot \rho \vol)=
\int_M  (\bar A_\rho \dot \rho)\, \dot\mu ,%=\int_M  (\Delta^{-1} \on{div} \frac{1}{\rho} A \frac{1}{\rho}\nabla \Delta^{-1} \dot \rho)\; \dot\rho \;\vol. 
\end{align}
where $\bar A_\rho \coloneqq L_{\rho}^{-1}$, $\mu=\rho\vol$ and $\dot\mu = \dot\rho \vol$.
% The metric $\bar G$ is of order $l$ as the inertia 
The pseudo-differential operator $\bar A_{\rho}$ is of order $2k$, so $\bar G$ is of order $k$.
\end{proposition}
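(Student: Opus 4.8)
The plan is to read the induced metric straight off the Riemannian submersion structure, combining the two structural lemmas already in place. First I would invoke \autoref{lem:hor_exists}: since the horizontal bundle $\operatorname{Hor}_\varphi$ exists as a genuine complement of $\operatorname{Ver}^l_\varphi$ in the Fréchet category, the finite-dimensional mechanism of \autoref{prop:descending_metric} becomes applicable. The point to emphasize is that full right-invariance of $G$ implies, in particular, invariance under the right action of the subgroup $\Diffvol(M)$, which is precisely the structure group of the principal bundle $\pi_l$ from \autoref{thm:principal_bundle_diff_dens}; hence $G$ is constant along the fibers $\varphi\circ\Diffvol(M)$ of $\pi_l$ and descends to a well-defined metric $\bar G$ on $\Prob(M)$, characterized by $\bar G_\mu(\dot\mu,\dot\mu) = G_\varphi(X^{\operatorname{Hor}}, X^{\operatorname{Hor}})$, where $X^{\operatorname{Hor}}$ is the horizontal lift of $\dot\mu$ and $\varphi$ is any diffeomorphism with $\pi_l(\varphi)=\mu$.

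The computation itself is then short. By \autoref{hor:lift} the horizontal lift of $\dot\mu = \dot\rho\,\vol$ is $u\circ\varphi$ with $u = A^{-1}(\rho\nabla p)$ and $p = L_\rho^{-1}(\dot\rho) = \bar A_\rho\dot\rho$. Right-invariance reduces $G_\varphi$ to $G_{\operatorname{\id}}$, so $\bar G_\mu(\dot\mu,\dot\mu) = \int_M g(Au,u)\,\vol$. Substituting $Au = \rho\nabla p$ gives $\int_M g(\rho\nabla p, A^{-1}(\rho\nabla p))\,\vol$. To identify this with the claimed expression I would run the same integration by parts that defines $L_\rho$ in \eqref{Lrho}: starting from $\int_M p\,\dot\rho\,\vol$, inserting $\dot\rho = L_\rho(p) = -\operatorname{div}(\rho A^{-1}(\rho\nabla p))$ and moving the divergence off $p$ produces exactly $\int_M g(\rho\nabla p, A^{-1}(\rho\nabla p))\,\vol$. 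Thus both quantities agree and $\bar G_\mu(\dot\mu,\dot\mu) = \int_M p\,\dot\rho\,\vol = \int_M (\bar A_\rho\dot\rho)\,\dot\mu$, as asserted.

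For the order statement I would simply track symbols through \autoref{lem:Lrho_inverse}: with $A$ elliptic of order $2k+2$, the composition $L_\rho = -\operatorname{div}\circ M_\rho\circ A^{-1}\circ M_\rho\circ\nabla$ has order $1-(2k+2)+1 = -2k$, so its inverse $\bar A_\rho = L_\rho^{-1}$ is a pseudo-differential operator of order $2k$; pairing $\bar A_\rho\dot\rho$ against $\dot\mu$ in $L^2$ then yields an $H^k$-type inner product, i.e.\ $\bar G$ is of order $k$.

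I do not expect a serious obstacle in the algebra. The only point requiring care --- and the reason the statement is nontrivial in this infinite-dimensional setting --- is the legitimacy of the descending construction itself: that the horizontal lift is well defined independently of the choice of $\varphi$ in the fiber, and that $\bar G$ is honestly a smooth, positive-definite Riemannian metric rather than a merely formal expression. Both follow from the preceding results, well-definedness from right-invariance together with \autoref{hor:lift}, and positivity and the isomorphism property from \autoref{lem:Lrho_inverse}, so the substantive work has effectively been front-loaded into those lemmas.
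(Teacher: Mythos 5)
Your proposal is correct and takes essentially the same route as the paper's proof: the horizontal lift $u=A^{-1}(\rho\nabla p)$ with $p=\bar A_\rho\dot\rho$ from \autoref{hor:lift}, reduction to $G_{\id}$ by right-invariance, a single integration by parts identifying the quadratic form with $\int_M (\bar A_\rho\dot\rho)\,\dot\mu$ via $L_\rho=\bar A_\rho^{-1}$, and derivative counting through $L_\rho=-\operatorname{div}\circ M_\rho\circ A^{-1}\circ M_\rho\circ\nabla$ for the order statement. The only cosmetic differences are that you run the integration by parts in the reverse direction (from $\int_M p\,\dot\rho\,\vol$ toward the quadratic form) and that you make explicit the $\Diffvol(M)$-invariance and well-definedness of the descent, which the paper states in the text preceding the proposition rather than inside its proof.
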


\begin{proof}
From \autoref{hor:lift} for the horizontal lift of a tangent vector we get
\begin{align}
\bar G_{\rho\vol}(\dot \rho \vol,\dot \rho \vol)&= G_\id( \rho \nabla \bar A_{\rho}(\dot \rho), \rho \nabla \bar A_{\rho}(\dot \rho)) = \int_M g\left(   \rho \nabla \bar A_{\rho}(\dot \rho), A^{-1}\left( \rho \nabla \bar A_{\rho}(\dot \rho)  \right)\right )\vol.
\end{align}
% where $\varphi$ is any diffeomorphism in the fiber of $\rho\vol$.
Using  integration by parts and that $A$ is self-adjoint we obtain
\begin{align}
G_{\mu}(\dot \mu, \dot \mu)&= \int_M g\left(  \on{div}  \rho A^{-1} \rho \nabla \bar A_{\rho}(\dot \rho),    \bar A_{\rho}(\dot \rho) \right )\vol
\end{align}
Since $\bar A_{\rho}^{-1}=L_{\rho}=\on{div}  \rho A^{-1} \rho \nabla$ this equals
 \begin{align}
G_{\mu}(\dot \mu, \dot \mu)&= \int_M g\left(  \bar A_{\rho}^{-1}\bar A_{\rho}(\dot \rho),   \bar A_{\rho}(\dot \rho) \right )\;\vol\\
&=\int_M g\left(  \dot \rho,   \bar A_{\rho}(\dot \rho) \right )\;\vol\;.
\end{align}
The order of the pseudodifferential operator $\bar A_\rho$ follows by counting derivatives.
\end{proof}

We are now ready to give the main result of the paper, concering geodesic completeness for Riemannian metric on $\Prob(M)$. 
(The result stated here covers \autoref{thm:main result} in \autoref{sub:summary}.)

% Using \autoref{thm:wellposednessDiff} and the result that $\pi_l$ is a Riemannian submersion we immediately obtain our main result concerning existence of geodesics on the space of probability densities:
\begin{theorem}[Main Result]\label{thm:main:result}
Let $\bar G$ be a Riemannian metric on $\Prob(M)$ of the form in \autoref{pro:descending_metric_formula}.
The geodesic equation expressed in Hamiltonian coordinates $(\rho,p)\in P^\infty(M)\times C^\infty(M)/\RR$ is then given by
% on the space of probability densities, given by: 
% $$\bar G_{\mu}(\dot \rho \vol,\dot \rho \vol)=\int_M  (\bar A_\rho \dot \rho)\; \dot\rho \;\vol, \qquad\text{ with }\qquad \bar A_\rho= -\left(\on{div} \rho A^{-1} \rho\nabla \right)^{-1}\;,$$
% for  $A$  a positive, elliptic, differential operator of order $2l+2$, that is self-adjoint with respect to 
% the $L^2$ inner product. The geodesic equations of the metric 
% $\bar G$ on $\Prob(M)$ are then given by:
\begin{equation}\label{geodesic:equation}
\begin{aligned}
\rho_t &= L_{\rho}(p) = -\on{div}(\rho A^{-1}(\rho\nabla p)),\\
p_t&=-\nabla p\cdot A^{-1}(\rho \nabla p)
\end{aligned}
\end{equation}
We have the following existence results for solutions to this equation:
\begin{itemize}
\item
If the order of $\bar G$ satisfies $k\geq 0$  then, for any initial conditions $(\rho_0,p_0) \in P^\infty(M)\times C^\infty(M)/\RR$, there exists a unique solution 
 \begin{equation}
  (\rho,p)\in C^{\infty}(J,T\Prob(M)) \quad\text{with}\quad \rho(0)=\rho_0 \quad\text{and}\quad p(0)=p_0,
 \end{equation}
defined on a non-empty, non-extendable existence interval $J$.
For any $t\in J$, $(\rho(t),p(t))$ depends smoothly on $(\rho_0,p_0)$.
% of the geodesic equation of the metric $\bar G$ with initial condition $(\vol,\al_0)$, defined on the maximal interval of existence $J$ and the solution depends smoothly on the initial data.
\item 
If $k> \frac{d}{2}$ then, for any initial conditions $(\rho_0,p_0)\in P^\infty(M)\times C^\infty(M)/\RR$, the existence interval $J$ is equal to $\mathbb R$.
That is, the infinite-dimensional Riemannian manifold $(\Prob(M),\bar G)$ is geodesically complete.
\end{itemize}
\end{theorem}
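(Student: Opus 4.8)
The plan is to realize the geodesic flow on $\Prob(M)$ as the projection, under the Riemannian submersion $\pi_l$, of the EPDiff geodesic flow on $\Diff(M)$, and thereby transfer the well-posedness results of \autoref{thm:wellposednessDiff} from the total space to the base. By \autoref{pro:descending_metric_formula} the metric $\bar G$ descends from the right-invariant metric $G$ whose inertia operator $A$ has order $2k+2$; in the notation of \autoref{thm:wellposednessDiff} this means the order parameter of $A$ equals $k+1$. The bookkeeping then matches the two regimes exactly: local existence on $\Diff(M)$ requires order at least one, i.e. $k+1\geq 1$, which is $k\geq 0$; global existence there requires the order parameter to exceed $d/2+1$, i.e. $k+1>d/2+1$, which is $k>d/2$.

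To derive the equations \eqref{geodesic:equation} I would use the Hamiltonian picture furnished by \autoref{lem:Lrho_inverse} and \autoref{hor:lift}. Since $\bar A_\rho=L_\rho^{-1}$ is the metric operator in \eqref{met:dens}, the associated cometric is $L_\rho$, and the geodesic Hamiltonian on $T^*\Prob(M)\simeq P^\infty(M)\times C^\infty(M)/\RR$ is
\begin{equation}
 H(\rho,p)=\tfrac12\int_M p\, L_\rho(p)\,\vol=\tfrac12\int_M g\big(\rho\nabla p,\,A^{-1}(\rho\nabla p)\big)\,\vol.
\end{equation}
The first line of \eqref{geodesic:equation} is then Hamilton's equation $\rho_t=\delta H/\delta p=L_\rho(p)$, which coincides with the horizontal-lift relation of \autoref{hor:lift}. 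For the second, a direct variation of $H$ in $\rho$, using self-adjointness of $A^{-1}$ and integration by parts, gives $\delta H/\delta\rho=g(\nabla p,A^{-1}(\rho\nabla p))$, whence $p_t=-\delta H/\delta\rho=-\nabla p\cdot A^{-1}(\rho\nabla p)$.

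For existence and uniqueness I would lift the Cauchy data. Given $(\rho_0,p_0)$, choose by Moser's theorem a diffeomorphism $\varphi_0$ with $(\varphi_0)_*\vol=\rho_0\vol$ (so $\rho_0=\on{Jac}(\varphi_0^{-1})$) and form the horizontal initial velocity $v_0=\big(A^{-1}(\rho_0\nabla p_0)\big)\circ\varphi_0\in\on{Hor}_{\varphi_0}$ supplied by \autoref{lem:hor_exists}. Solving EPDiff \eqref{eq:epdiff1}--\eqref{eq:epdiff2} with data $(\varphi_0,v_0)$, \autoref{thm:wellposednessDiff}(1) produces, in the regime $k\geq0$, a unique Fréchet-smooth geodesic $\varphi(t)$ on a maximal interval $J$. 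Because $v_0$ is horizontal, \autoref{Riemanniasubmersions}(1) keeps $\dot\varphi(t)$ horizontal throughout $J$, and \autoref{Riemanniasubmersions}(2) shows that $\rho(t)\vol=\pi_l(\varphi(t))$ is a geodesic on $\Prob(M)$; recovering $p(t)=L_{\rho(t)}^{-1}(\rho_t)$ via \autoref{lem:Lrho_inverse} yields a solution of \eqref{geodesic:equation}. Uniqueness and smooth dependence on $(\rho_0,p_0)$ descend from the corresponding properties of the EPDiff solution map composed with the Fréchet-smooth maps $\pi_l$, a local smooth section of the principal bundle of \autoref{thm:principal_bundle_diff_dens}, and $\rho\mapsto L_\rho^{-1}$; the result is independent of the chosen lift $\varphi_0$ by right $\Diffvol(M)$-invariance of $G$.

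Global existence when $k>d/2$ then follows: \autoref{thm:wellposednessDiff}(2) forces $J=\RR$ for the lifted geodesic whenever $k+1>d/2+1$, and projecting preserves the existence interval, so $(\Prob(M),\bar G)$ is geodesically complete. \textbf{The main obstacle} I anticipate is that none of this survives in the Sobolev category in which \autoref{thm:wellposednessDiff} is most naturally proved: by \autoref{lem:vertical}(1) the left projection $\pi_l^s\colon\mathcal{D}^s(M)\to\Prob^{s-1}(M)$ loses two derivatives and is merely continuous, never $C^1$, so the strong form \eqref{geodesic:equation} is not well-posed on any $H^s$ and a Sobolev geodesic does not project to a Sobolev geodesic. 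The entire argument must therefore run in the smooth Fréchet topology, where EPDiff is smoothly solvable by the no-loss-no-gain principle and where $\pi_l$ and $\rho\mapsto L_\rho^{-1}$ are smooth; the delicate points are to confirm that horizontality is genuinely preserved along the flow in this infinite-dimensional setting and that completeness of the lifted geodesics transfers without loss to the base.
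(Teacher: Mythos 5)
Your overall architecture coincides with the paper's: lift $(\rho_0,p_0)$ to horizontal Cauchy data on $\Diff(M)$ via Moser's theorem and \autoref{lem:hor_exists}, solve EPDiff using \autoref{thm:wellposednessDiff} with the order bookkeeping $2k+2=2(k+1)$ (hence $k\geq 0$ for local and $k>\frac{d}{2}$ for global existence, exactly as in the paper), project by $\pi_l$, recover $p=L_\rho^{-1}(\rho_t)$ via \autoref{lem:Lrho_inverse}, and insist on the Fréchet category because $\pi^s_l$ is not $C^1$. Your derivation of the equations \eqref{geodesic:equation} from the Hamiltonian $H(\rho,p)=\tfrac12\int_M p\,L_\rho(p)\,\vol$ is a legitimate alternative to the paper's route (the paper instead substitutes $m=\rho\nabla p$ directly into EPDiff), though in this infinite-dimensional setting it remains a formal variational computation.

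There is, however, one genuine gap, and it sits at the load-bearing step: you invoke \autoref{Riemanniasubmersions}(1)--(2) to conclude that horizontality is preserved along the EPDiff flow and that the projected curve is a geodesic on $\Prob(M)$. That proposition is stated and proved only for finite-dimensional manifolds, and the paper explicitly warns in \autoref{sub:submersions_infinite} that its statements ``must be validated explicitly'' in this setting; you concede as much in your closing sentence, but flagging the delicate point is not the same as closing it. The paper closes it by a direct computation that your Hamiltonian derivation does not supply: substituting $u=A^{-1}(\rho\nabla p)$ and $m=\rho\nabla p$ into \eqref{eq:epdiff2}, expanding $m_t=\rho_t\nabla p+\rho\nabla p_t$, using $\divv(\rho u)=-\rho_t$, and the pointwise identity
\begin{equation*}
-\nabla_u(\rho\nabla p)-(\nabla u)^\top(\rho\nabla p)+(u\cdot\nabla\rho)\nabla p=-\rho\nabla(u\cdot\nabla p),
\end{equation*}
which reduces EPDiff on momenta of the form $\rho\nabla p$ to $\rho\nabla p_t=-\rho\nabla(u\cdot\nabla p)$. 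Since $\rho>0$, this shows simultaneously that (i) an EPDiff solution with $m(0)=\rho_0\nabla p_0$ keeps $m(t)$ of the form $\rho(t)\nabla p(t)$, i.e.\ initially horizontal geodesics remain horizontal, and (ii) on such momenta EPDiff is equivalent to \eqref{geodesic:equation}, so the projected curve genuinely solves the stated system; uniqueness of $(\rho,p)$ then descends from uniqueness of EPDiff solutions together with right-invariance under the choice of lift $\varphi_0$. Without this explicit argument (or an equivalent one), your projected curve is not known to satisfy \eqref{geodesic:equation}, and the transfer of local and global well-posedness from $\Diff(M)$ to $\Prob(M)$ is unsupported; with it, your proposal becomes essentially the paper's proof.
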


% \begin{remark}
% Note, that the derivation of the geodesic equation implicitly deals with the second complication described in \autoref{sub:submersions_infinite}.
% \end{remark}

\begin{remark}
To obtain the form of the geodesic equation resembling Otto's formulation of optimal transport, as presented in \autoref{sub:summary}, we introduce the vector field $u\in\Xcal(M)$ via
\begin{equation}
	A u = \rho\nabla p .
	\end{equation}
% where $p$ is implicitly defined via the continuity equation
% \begin{equation}
% 	\rho_t + \operatorname{div}(\rho u) = 0.
% \end{equation} 
Then the geodesic equation \eqref{geodesic:equation} becomes
\begin{equation}
	\begin{split}
		&\rho_t + \divv(\rho u) = 0, \\
		&p_t + u\cdot \nabla p = 0.
	\end{split}
\end{equation}
\end{remark}

\begin{proof}
To derive the formula for the geodesic equation, we consider the EPDiff equation of the $G$-metric on $\Diff(M)$ for horizontal initial conditions. 
% This will show at the same time the conservation of horizontality and calculate the  geodesic equation on the space of probability densities.
Recaling from \autoref{thm:wellposednessDiff} the geodesic equation of a right-invariant metric on $\Diff(M)$ we have
\begin{align}
\varphi_t &= u\circ\varphi \label{eq:reconstruction}\\
m_t&=-\left\{\nabla_u m+(\nabla u)^\top m+(\mathrm{div}\hspace{0,1cm}u)m\right\}\;, \label{eq:epdiffinproof}
\end{align}
where $m = Au$ is the \emph{momentum variable}.
For any curve $u(t)$ in $\Xcal(M)$ it directly follows by differentiating the relation $\rho(t) = \Jac(\varphi(t)^{-1})$ that $\rho$ fulfills the first part of \eqref{geodesic:equation}.
This also proves that $\rho(t)\in P^\infty(M)$ as long as $\varphi(t)\in\Diff(M)$.

 % and $u=\varphi_t\circ\varphi^{-1}$ is the right trivialized variable.
Let now $\rho(t)$ be a path on $P^\infty(M)$ and let
$p=\bar A_{\rho} \rho_t$.
Using \autoref{hor:lift} we obtain the horizontal vector field $u$ corresponding to $\rho_t$ as
\begin{align}
u = A^{-1}(\rho \nabla p).
\end{align}
Substituting this in the EPDiff equation gives
\begin{align}
u &= A^{-1}(\rho \nabla p),\quad m=\rho \nabla p\\
m_t&=-\left\{\nabla_u m+(\nabla u)^\top m+(\mathrm{div}\hspace{0,1cm}u)m\right\}\,.
\label{eq:epdiff_subst}
\end{align}
Expanding the left side of \eqref{eq:epdiff_subst} yields
\begin{align}
m_t= \rho_t \nabla p+\rho \nabla p_t\;.
\end{align}
For the divergence term in the right hand side of \eqref{eq:epdiff_subst} we have
\begin{align}
 (\mathrm{div}\hspace{0,1cm}u) m&	= (\mathrm{div}\hspace{0,1cm}u) \rho \nabla p =
 \mathrm{div}(\rho u) \nabla p -(u \cdot \nabla \rho) \nabla p .
 \end{align}
 Using that 
 $$u= A^{-1}(\rho \nabla p)=A^{-1}(\rho \nabla (\bar A_{\rho} \rho_t )$$
 we have 
 \begin{align}
 \mathrm{div}(\rho u) \nabla p =
\mathrm{div}\left(\rho A^{-1}(\rho \nabla \bar A_{\rho} \rho_t \right)  \nabla p=
 -\bar A^{-1}_{\rho}\bar A_{\rho}(\rho_t) \nabla p=-\rho_t \nabla p\;.
 \end{align}
Thus \eqref{eq:epdiff_subst} simplifies to
\begin{align}
 \rho \nabla p_t&=-\nabla_u m-(\nabla u)^\top m+ (u \cdot \nabla \rho) \nabla p.
\end{align}
The right hand side can now be rewritten as
\begin{align}
% -\nabla_u m-(\nabla u)^\top m + (u \cdot \nabla \rho) \nabla p =
-\nabla_u (\rho \nabla p)-(\nabla u)^\top (\rho \nabla p)+ (u \cdot\nabla \rho) \nabla p=-\rho \nabla ( u\cdot\nabla p)\;.
\end{align}
Equation \eqref{eq:epdiff_subst} can thereby be written
\begin{equation}\label{eq:final_eq_for_p}
	\rho\nabla p_t = -\rho \nabla (u\cdot \nabla p).
\end{equation}
Since $\rho>0$ we get that $p\in C^\infty(M)/\RR$ fulfills the second part of \eqref{geodesic:equation} if and only if $m=\rho\nabla p$ fulfills the EPDiff equation \eqref{eq:epdiff_subst}.
The same calculation also shows explicitly that if $m(t)$ is a solution to the EPDiff equation, and $m(0)$ is of the form $\rho\nabla p$, then $m(t)$ is of the same form for any $t$.
That is, initially horizontal geodesics remain horizontal.

To prove the well posedness results, we need, for any choice of initial data $(\rho_0,p_0)$, to construct a solution $(\rho,p) \in C^{\infty}(J,P^\infty(M),C^\infty(M)/\RR)$ to equation \eqref{geodesic:equation} with $\rho(0)=\rho_0$ and $p(0) = p_0$, and to show that this solution is unique.
To this end, let $\varphi_0$ be an arbitrary diffeomorphism such that $\varphi_{0*}\vol = \rho_0\vol$ (such a diffeomorphism always exist by \autoref{thm:principal_bundle_diff_dens}).
Let $\dot\rho_0 = L_{\rho_0}p_0$.
Using \autoref{hor:lift} we can lift $(\rho_0,\dot\rho_0)$ to a unique horizontal vector $u_0\circ \varphi_0 \in T_{\varphi_0}\Diff(M)$.
Using  \autoref{thm:wellposednessDiff} with $A$ of order $2k+2\geq 2$ we obtain a unique solution $(\varphi,u)$ with $\varphi(0) = \varphi_0$ and $u(0) = \dot\varphi_0\circ\varphi_0^{-1}$ on a non-empty maximal existence interval $J$ containing $0$.
Since $\dot\varphi(0) \in \on{Hor}_{\varphi(0)}$ it follows from the calculation above that $\dot\varphi(t) \in \on{Hor}_{\varphi(t)}$ for every $t\in J$.
Thus, the momentum variable $m(t) \coloneqq Au(t)$ is of the form $m(t) = \rho(t)\nabla p(t)$ where $\rho(t) = \Jac(\varphi(t)^{-1})$.
Notice that $\rho(t)\in P^\infty(M)$ is unique for any $t\in J$ since $\varphi(t)$ is unique and the choice of representative $\varphi_0$ for $\rho_0$ does not affect $\rho(t)$ due to right-invariance of the EPDiff equation.
It follows from \autoref{lem:Lrho_inverse} and \autoref{hor:lift} that $p(t) \in C^\infty(M)/\RR$ is unique for any $t\in J$.
By the calculations leading from \eqref{eq:reconstruction} and \eqref{eq:epdiffinproof} to \eqref{eq:final_eq_for_p} we see that $(\rho(t),p(t))$ fulfill \eqref{geodesic:equation}.
That $(\rho(t),p(t))$ for $t\in J$ depends smoothly on the initial conditions follows from \autoref{thm:wellposednessDiff} and the smooth principal bundle result in \autoref{thm:principal_bundle_diff_dens}.
If $k>\frac{d}{2}$ it follows from \autoref{thm:wellposednessDiff} that $J=\RR$.
This proves the results.
%  let $\vol, \alpha_0$ be given initial conditions. Using \autoref{hor:lift} we can lift it to a horizontal vector field $v$ on $\Diff(M)$. Using  \autoref{thm:wellposednessDiff} we obtain the local well-posedness of the geodesic equation for $l\geq 0$. Since geodesics with horizontal initial velocity stays horizontal for all time and since horizontal geodesics on $\Diff(M)$ project onto geodesics on 
% $(\Prob(M),G)$, cf. \autoref{Riemanniasubmersions}, this shows the local well-posedness on the space of probability densities. The exact same argument  shows the geodesic completeness, provided $l>\frac{d}{2}$.
\end{proof}
\subsection{Ill-posedness in the Sobolev category}
Recall from \autoref{sec:right_inv_metrics} that the strong form of the geodesic equation with respect to a right-invariant metric $G$ of the form \eqref{eq:Gmet_DiffM} is well-posed on the Sobolev completion $\mathcal{D}^s(M)$ (if $s$ is large enough).
Indeed, although there appears to be a loss of derivatives in the EPDiff equation \eqref{eq:epdiff2}, when formulated in the Lagrangian coordinates $(\varphi,\dot\varphi) \in T\mathcal{D}^s(M)$ there is no loss of derivatives.
However, for the geodesic equation \eqref{geodesic:equation} examined in this paper \emph{there is} a loss of derivatives.
Indeed, it follows directly from \eqref{geodesic:equation} that there is a loss of one derivative in the right hand side of of both $\rho_t$ and $p_t$.
Thus, the strong form \eqref{geodesic:equation} of the geodesic equation is not well-posed in the Sobolev $H^s$ category.
The new system \eqref{geodesic:equation} thereby constitutes an interesting example of a strongly formulated PDE with loss of derivatives, yet well-posedness in the smooth category (like the heat equation), but non-parabolic and fully non-linear (unlike the heat equation).

From a geometrical point of view, the reason behind the appearance of the loss of derivatives when going from diffeomorphisms to probability densities is that the left projection \eqref{projection_left} is not smooth (only continuous).
We expect that there is a suitable weak formulation of \eqref{geodesic:equation} that is well-posed in the Sobolev category.
Indeed, if $\gamma(t)$ is a horizontal geodesic on $\mathcal{D}^{s}(M)$, then $\bar\gamma(t) = \pi^s_l(\gamma(t))$ is a candidate curve on $\Prob^{s-1}(M)$.
Since $\gamma(t)$ is smooth and $\pi^{s}_l$ is a continuous mapping, $\bar\gamma(t)$ is a continuous curve on $\Prob^{s-1}(M)$.
However, due to the lack of smoothness of $\pi^{s}_l$, the curve $\bar\gamma(t)$ is not smooth (not even $C^1$), so it can only fulfill a weak version of \eqref{geodesic:equation}.
A detailed study of these matters is a future topic.
 
% In this part we want to show that the geodesic equations are ill-posed in the Sobolev category. 
% Therefore we consider the simplest example, i.e., $M=S^1$. 
% Then the geodesic equations reads as:
% \begin{equation}\label{geodesic:equation:dim1}
% \begin{aligned}
% \rho_t &= -(\rho A^{-1}(\rho p_x))_x,\\
% p_t&=-p_x.A^{-1}(\rho p_x)
% \end{aligned}
% \end{equation}
% We can rewrite the first equation to obtain
% \begin{equation}
% \rho_t = -\rho_x A^{-1}(\rho p_x)-\rho A^{-1}(\rho_x p_x+\rho p_{xx})\;,
% \end{equation}
% or equivalently
% \begin{equation}
% A^{-1}(\rho p_x) \rho_x = - \rho_t-\rho A^{-1}(\rho_x p_x+\rho p_{xx})\;.
% \end{equation}
% From here it is easy to see that the left hand side is only in $H^{q-1}$, 
% while the right hand side is in $H^q$ -- for $A$ of order greater or equal then two. This shows the ill-posedness in the Sobolev category.\todo[inline,author=KM]{Write something about finding weak solutions in the $H^s$ category: we sort of have them already.}

\section*{Acknowledgments}
The authors would like to thank Martins Bruveris and Francois-Xavier Vialard for fruitful discussions.
This project has received funding from the European Union's Horizon 2020 research and innovation programme under grant agreement No 661482, and from the Swedish Foundation for Strategic Research under grant agreement ICA12-0052.

\bibliographystyle{amsplainnat} % (plain, alpha, abbrv, acm, ieee, abbrvnat)
\bibliography{density_matching}

\end{document}